\newtheorem{thm}[subsection]{Theorem}
\newtheorem{lem}[subsection]{Lemma}
\newtheorem{cor}[subsection]{Corollary}
\newtheorem{pro}[subsection]{Proposition}
\newtheorem{exa}[subsection]{Example}
\newtheorem{defn}[subsection]{Definition}
\newtheorem{rem}[subsection]{Remark}
\newcommand{\C}{\mathbb{C}}
\newcommand{\Z}{\mathbb{Z}}
\newcommand{\N}{\mathbb{N}}
\newcommand{\norm}[1]{\left\lVert #1 \right\rVert}
\numberwithin{equation}{section}
\author{\small{ \textsc{SOLANGE BRIDGITTE} DIFO} \\ \small {African Institute for Mathematical Sciences Cameroon} \\ \small{University of Yaounde I}}
\title{\small{ XIA'S THEOREM FOR THE FOCK SPACE} \(H^{2}(\mathbb{C}^n, d\mu)\)}
\begin{document}
	
	\maketitle

	\begin{abstract}
		In this paper, we provide a detailed proof for Xia's following theorem: the \(C^{*}\)-algebra generated by the class of weakly localized operators on \(H^{2}(\C^n,d\mu)\) coincides with \(\mathcal{T}^{(1)}\).
	\end{abstract}
	
	\section{Introduction}
	
	For some \(\alpha>0\), \(p>0\) and \(dV\) the standard volume measure on \(\C^n\), let \(L^{p}_{\alpha}(\C^n,dV)\) be the Lebesgue space of measurable functions \(f\) on \(\C^n\) such that 
	\begin{equation}\label{Fnorm1}
		\norm{f}^{p}_{\alpha} := \left(\frac{p\alpha}{2\pi}\right)^{n} \int_{\C^n} |f(z)e^{-\frac{\alpha}{2}|z|^2}|^{p} dV(z) < \infty~.
	\end{equation}
	Similarly, for \(\alpha>0\) and \(p=\infty\), we denoted by \(L^{\infty}_{\alpha}\) the space of Lebesgue measurable function \(f\) on \(\C^n\) such that 
	\begin{equation*}
		\norm{f}_{\infty,\alpha} := \text{esssup}\{|f(z)|e^{-\frac{\alpha|z|^2}{2}} : ~z \in \C^n\}< \infty ~.
	\end{equation*}
	The classical Fock space \(F^{p}_{\alpha}\) is the space of entire functions on \(\C^n\) which belong to \( L^{p}_{\alpha}(\C^n, dV)\). Similarly, the Fock space \(F^{\infty}_{\alpha}\) is the space of entire functions on \(\C^n\) which belong to \(L^{\infty}_{\alpha}\).
	
	Let \(d\mu\) be the Gaussian measure on \(\C^n\), \(n\ge 1\). In terms of the standard volume measure \(dV\) on \(\C^n\), it is given by
	\begin{equation*}
		d\mu(z) = \pi^{-n} e^{-|z|^2} dV(z)~.
	\end{equation*}
	
	 The Fock space \(H^{2}(\C^n,d\mu)\) is defined to be the subspace of the (Hilbert-) Lebesgue space \(L^{2}(\C^n,d\mu)\) consisting of entire functions. Notice that \(H^{2}(\C^n,d\mu) = F^{2}_{1} \). The symbol \(K_z\) denotes the reproducing kernel and the symbol \(k_z\) denotes the normalized reproducing kernel for \(H^{2}(\C^n,d\mu)\). That is, 
	\begin{equation*}
		K_z(\zeta) = e^{\langle \zeta, z \rangle}, \quad k_{z}(\zeta) = e^{\langle \zeta, z\rangle}e^{-\frac{|z|^2}{2}}~, \quad z,\zeta \in \C^n.
	\end{equation*}

	In \cite{xia2015localization}, J. Xia showed in the case of the Bergman space on the unit ball of \(\C^n\) that the norm closure of \(\{T_{f}: f \in L^{\infty}(B,dv) \}\) coincides with the \(C^{*}\)-algebra of weakly localized operators. Also, he stated in \cite[Section 4 ]{xia2015localization}  that  the analogue of \cite[Theorem 1.5]{xia2015localization} on the Fock space \(H^{2}(\C^n, d\mu)\) was true. In this paper, we define the notion of weakly localized operators, state Xia's theorem for the Fock space \(H^{2}(\C^n, d\mu)\) and provide details of its proof. Further, we present a consequence of this theorem on the compactness of operators on \(H^{2}(\C^n,d\mu)\).  We begin with the following definitions and we state the main theorem, the proof of which will retain our attention in the following sections.

	\begin{defn}
		For \(f \in L^{\infty}(\C^n,dV) \), the \textbf{Toeplitz operator} \(T_{f}\) is defined by the formula
		\begin{equation*}
			T_{f}h = P(fh)~,\quad h \in H^{2}(\C^n,d\mu)~,
		\end{equation*}
		where \(P \colon L^{2}(\C^n,d\mu) \rightarrow H^{2}(\C^n,d\mu)\) is the orthogonal projection.
	\end{defn}
	
	The \textbf{standard lattice} in \(\C^n\) is denoted by 
	\begin{equation*}
		\Z^{2n} = \{(m_{1}+il_{1}, \dots, m_{n}+il_{n}): m_{1}, l_{1}, \dots, m_{n},l_{n} \in \Z\}~.
	\end{equation*}
	We fix an orthonormal set \(\{e_{u}: u \in \Z^{2n}\}\) in \(H^{2}(\C^n,d\mu)\). We let \(S\) denote the \textbf{fundamental unit cube in }\(\C^{n}\). That is,
	\begin{equation*}
		S = \{(x_{1}+iy_{1}, \dots, x_{n}+iy_{n}) : x_{1},y_{1}, \dots , x_{n},y_{n} \in [0,1)\}.
	\end{equation*}
	With \(\Z^{2n}\) and \(S\), we have 
	\begin{equation*}
		\cup_{u \in \Z^{2n}} \{S+u\} = \C^{n} = \cup_{u \in \Z^{2n}} \{u-S\} ~,
	\end{equation*}
	which is a tiling of the space, meaning that there is no overlap between \(S+u\) and \(S+v\) for \(u \neq v\) in \(\Z^{2n}\) (resp. between \(u-S\) and \(v-S\) for \(u \neq v \in \Z^{2n}\) ).

	\begin{defn}
		Let \(\mathcal{T}^{(1)}\) denote the norm closure of \(\{T_{f}: f\in L^{\infty}(\C^n,dV)\}\) in \(\mathcal{B}(H^2(\C^n,d\mu))\) with respect to the operator norm. That is 
		\begin{equation*}
			\mathcal{T}^{(1)} = \{B : \lim_{k \to \infty } \norm{B-T_{b_{k}}}=0 , b_{k} \in L^{\infty}(\C^{n},dV)\}.
		\end{equation*}
	\end{defn}
	
	\begin{defn}
		We denote by \(\mathcal{S}\) the linear span of the normalized reproducing kernels \(k_z\). A linear operator \(B : \mathcal{S} \rightarrow H^{2}(\C^n,d\mu) \) is said to be \textbf{admissible} on \( H^{2}(\C^n,d\mu) \) if there exists a linear operator \( B^{*} : \mathcal{S} \rightarrow H^{2}(\C^n,d\mu) \) such that the duality relation 
		\begin{equation}\label{R1}
			\langle Bk_{z}, k_{w} \rangle = \langle k_{z}, B^{*}k_w \rangle
		\end{equation}
		holds for all \(z,w \in \C^n\).	
	\end{defn}
	The inner product here is with respect to \(d\mu\).
	
	We define below sufficiently localized operators following J. Xia and D. Zheng (XZ) in \cite{xia2013localization}.
	\begin{defn}
		A bounded linear operator \(B\) on \(H^{2}(\C^n,d\mu)\) is said to be XZ-\textbf{sufficiently localized} if there exist constants \(2n<\beta<\infty\) and \(0<C<\infty\) such that
		\begin{equation}\label{SL1}
			|\langle Bk_{z},k_{w}\rangle| \le \frac{C}{(1+|z-w|)^{\beta}}
		\end{equation}
		for all \(z,w\in \C^n\).
	\end{defn}

		\begin{defn}
		An admissible operator \(B\) on \(H^{2}(\C^n,d\mu)\) is said to be \textbf{weakly localized} if it satisfies the following four conditions
		\begin{equation*} 
			\sup_{z\in \C^n}\int_{\C^n} |\langle Bk_{z}, k_{w} \rangle| dV(w) <\infty, \quad \sup_{z\in \C^n}\int_{\C^n} |\langle B^{*} k_{z}, k_{w} \rangle| dV(w) <\infty
		\end{equation*}
		
		and 
		\begin{equation*}
			\lim_{r\to\infty}\sup_{z\in \C^n}\int_{|z-w|\ge r} |\langle Bk_{z}, k_{w} \rangle| dV(w) =0, \quad \lim_{r\to\infty}\sup_{z\in \C^n}\int_{|z-w|\ge r} |\langle B^{*} k_{z}, k_{w} \rangle| dV(w) =0~.
		\end{equation*}
	\end{defn}
	
	\begin{exa}\label{Ex1}
		If \(f\) is a \textbf{bounded measurable function} on \(\C^n\), then there is a positive constant \(0<C=C(f)<\infty\) such that 
		\begin{equation*}
			|\langle T_{f}k_{z}, k_{w}\rangle| \le C e^{-(1/8) |z-w|^2}
		\end{equation*}
		for all \(z,w \in \C^n\). That is, \(T_{f} \in \text{XZ-}\mathcal{SL}\). Also, \(T_{f} \in \mathcal{L}_{p}\) for \(2<p<\frac{8}{3}\).
	\end{exa}
	
	\begin{proof}
		For each \(z, w, \xi \in \C^n\), we have 
		\begin{equation*}
			|k_{z}(\xi)k_{w}(\xi)|e^{-|\xi|^2} = |e^{\langle \xi , z \rangle - \frac{|z|^2}{2} } e^{\langle \xi, w \rangle - \frac{|w|^2}{2}} | e^{-|\xi|^2} = e^{-\frac{1}{2}\left(|z-\xi|^2 + |w-\xi|^2\right)} ~.
		\end{equation*}
		By the triangular inequality we have 
		\begin{equation*}
			|z-w|^2 \le |z-\xi + \xi -w|^2 \le (|z-\xi| + |w-\xi|)^2 \le 2 (|z-\xi|^2 + |w-\xi|^2)~.
		\end{equation*}
		It follows that
		\begin{eqnarray}\label{kzw}
			|k_{z}(\xi)k_{w}(\xi)|e^{-|\xi|^2} = e^{-\frac{1}{4}\left(|z-\xi|^2 + |w-\xi|^2\right)}e^{-\frac{1}{4}\left(|z-\xi|^2 + |w-\xi|^2\right)} \le e^{-\frac{1}{4}\left(|z-\xi|^2 + |w-\xi|^2\right)} e^{-\frac{1}{8}|z-w|^2}~.
		\end{eqnarray}
		Let \(f\) be a bounded measurable function on \(\C^n\), then for all \(g,h\in H^{2}(\C^n,d\mu)\), it holds
		\begin{equation}\label{Toepfgh}
			\langle T_{f}g, h \rangle = \int_{\C^{n}} f(\xi)g(\xi)\overline{h(\xi)} d\mu(\xi)~.
		\end{equation}
		In fact, 
		\begin{eqnarray*}
			\langle T_{f}g, h \rangle &=&  \int_{\C^{n}} T_{f}g(w)\overline{h(w)} d\mu(w) \\
			&=& \int_{\C^{n}}\int_{\C^{n}} K(w,\xi) f(\xi)g(\xi) d\mu(\xi) \overline{h(w)} d\mu(w) \\
			&=& \int_{\C^{n}} f(\xi)g(\xi) \int_{\C^{n}} K_{\xi}(w) \overline{h(w)} d\mu(w) d\mu(\xi) \\
			&=& \int_{\C^{n}} f(\xi)g(\xi) \langle K_{\xi} , h \rangle d\mu(\xi) = \int_{\C^{n}} f(\xi)g(\xi) \overline{\langle  h, K_{\xi}  \rangle} d\mu(\xi) \\
			&=& \int_{\C^{n}} f(\xi)g(\xi)\overline{h(\xi)} d\mu(\xi)~.
		\end{eqnarray*}
		Therefore,
		\begin{eqnarray*}
			|\langle T_{f}k_{z}, k_{w} \rangle| &=& \left|\int_{\C^n} 
			f(\xi)k_{z}(\xi) \overline{k_{w}(\xi)} d\mu(\xi)\right|  \\
			&\le & \frac{\norm{f}_{\infty}}{\pi^n} \int_{\C^n} |k_{z}(\xi) k_{w}(\xi)| e^{-|\xi|^2} dV(\xi) \\
			& \le & \frac{\norm{f}_{\infty}}{\pi^n}  e^{-\frac{1}{8}|z-w|^2} \int_{\C^n} e^{-\frac{1}{4}\left(|z-\xi|^2 + |w-\xi|^2\right)}  dV(\xi) \\
			& \le & \frac{\norm{f}_{\infty}}{\pi^n}  e^{-\frac{1}{8}|z-w|^2} \left(\int_{\C^n} e^{-\frac{1}{2}|z-\xi|^2} dV(\xi) \right)^{\frac{1}{2}} \left(\int_{\C^n} e^{-\frac{1}{2}|w-\xi|^2}dV(\xi) \right)^{\frac{1}{2}} \\
			&=& \frac{\norm{f}_{\infty}}{\pi^n}  e^{-\frac{1}{8}|z-w|^2} \int_{\C^n} e^{-\frac{1}{2}|\zeta|^2} dV(\zeta) \\
			&=& (\sqrt{2})^n \norm{f}_{\infty} e^{-\frac{1}{8}|z-w|^2}~.
		\end{eqnarray*}	
		This completes the proof.
	\end{proof}
	
	 \begin{pro}
	 	Any XZ-sufficiently localized operator is weakly localized.
	 \end{pro}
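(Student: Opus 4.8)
The plan is to reduce everything to the translation invariance of Lebesgue measure on \(\C^n\cong\R^{2n}\), exploiting that the defining bound \eqref{SL1} is uniform in the base point \(z\). First I would record that an XZ-sufficiently localized operator \(B\) is admissible for free: as a bounded operator on \(H^2(\C^n,d\mu)\) it has a Hilbert-space adjoint \(B^*\), whose restriction to \(\mathcal S\) is a linear operator \(\mathcal S\to H^2(\C^n,d\mu)\) satisfying \eqref{R1} by the very definition of the adjoint. Moreover \(B^*\) inherits the estimate \eqref{SL1} with the same \(\beta\) and \(C\), since
\[
|\langle B^*k_z,k_w\rangle|=|\langle Bk_w,k_z\rangle|\le\frac{C}{(1+|w-z|)^{\beta}}=\frac{C}{(1+|z-w|)^{\beta}}.
\]
Consequently it is enough to verify the two conditions defining weak localization for \(B\) itself; applying the result to \(B^*\) then supplies the remaining two.

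Next, for each fixed \(z\in\C^n\), I would use \eqref{SL1} and the substitution \(u=z-w\) to obtain
\[
\int_{\C^n}|\langle Bk_z,k_w\rangle|\,dV(w)\le C\int_{\C^n}\frac{dV(w)}{(1+|z-w|)^{\beta}}=C\int_{\C^n}\frac{dV(u)}{(1+|u|)^{\beta}},
\]
and, for \(r>0\),
\[
\int_{|z-w|\ge r}|\langle Bk_z,k_w\rangle|\,dV(w)\le C\int_{|u|\ge r}\frac{dV(u)}{(1+|u|)^{\beta}}.
\]
The right-hand sides no longer depend on \(z\). Passing to polar coordinates on \(\R^{2n}\), the integral \(\int_{\C^n}(1+|u|)^{-\beta}\,dV(u)\) equals a dimensional constant times \(\int_0^\infty\rho^{2n-1}(1+\rho)^{-\beta}\,d\rho\), which converges precisely because the hypothesis \(\beta>2n\) forces the integrand to decay like \(\rho^{2n-1-\beta}\) with \(2n-1-\beta<-1\) at infinity (and it is harmless near \(0\)). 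This settles the two ``finite supremum'' conditions for \(B\), hence also for \(B^*\).

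Finally, since \(\int_{\C^n}(1+|u|)^{-\beta}\,dV(u)<\infty\), the function \(r\mapsto\int_{|u|\ge r}(1+|u|)^{-\beta}\,dV(u)\) is the tail of a convergent integral, so it tends to \(0\) as \(r\to\infty\); as the bounding expression is independent of \(z\), this yields \(\lim_{r\to\infty}\sup_{z\in\C^n}\int_{|z-w|\ge r}|\langle Bk_z,k_w\rangle|\,dV(w)=0\), and the same for \(B^*\). I do not anticipate a real obstacle: the only point needing a little care is the verification that \(B^*\) satisfies \eqref{SL1}, after which the whole proposition rests on the elementary fact that \(\int_{\R^{2n}}(1+|u|)^{-\beta}\,dV(u)<\infty\) exactly when \(\beta>2n\), which is exactly the assumption on \(\beta\).
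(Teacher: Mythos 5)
Your proof is correct and follows essentially the same route as the paper: translation invariance plus polar coordinates reduce both conditions to the convergence of \(\int_{\C^n}(1+|u|)^{-\beta}\,dV(u)\) for \(\beta>2n\), with the tail integral handling the limit condition. Your explicit verification that \(B^{*}\) inherits the bound \eqref{SL1} from the symmetry \(|\langle B^{*}k_{z},k_{w}\rangle|=|\langle Bk_{w},k_{z}\rangle|\) is a welcome bit of added care that the paper only gestures at ("in the same way").
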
 
	\begin{proof}
	Let \(B\) be a XZ-sufficiently localized operator, then  for all \(z\in \C^n\), we have
	\begin{eqnarray*}
		\int_{\C^n} |\langle Bk_z, k_w \rangle| dV(w) &\le& \frac{C}{\pi^n} \int_{\C^n} \frac{1}{(1+|\zeta|)^{\beta}} dV(\zeta) \\
		&=& \frac{Cc_n}{\pi^n} \int_{0}^{\infty} \frac{r^{2n-1}}{(1+r)^{\beta}} dr \\
		&=& C_n \left[ \int_{0}^{1} \frac{r^{2n-1}}{(1+r)^{\beta}} dr  + \int_{1}^{\infty } \frac{r^{2n-1}}{(1+r)^{\beta}} dr\right] \\
		&\le & C_n \left[ \int_{0}^{1} \frac{1}{(1+r)^{\beta}} dr +  \int_{1}^{\infty } \frac{r^{2n-1}}{r^{\beta}} dr\right] \\
		&=& C_n \left\{\frac{1}{\beta-1}\left(1-\frac{1}{2^{\beta}-1}\right) + \frac{1}{\beta-2n}\right\} \\
		&:=& C(n,\beta)~,
	\end{eqnarray*}
	which does not depend on \(z\). Therefore,
	\begin{equation*}
		\sup_{z\in \C^n}\int_{\C^n} |\langle Bk_z, k_w \rangle| dV(w) \le C(n,\beta)  <\infty~.
	\end{equation*}
	And in the same way, we also have
	\begin{equation*}
		\sup_{z\in \C^n}\int_{\C^n} |\langle B^*k_z, k_w \rangle| dV(w) \le C(n,\beta) <\infty~.
	\end{equation*}
	We now show that \(B\) satisfies the third condition of weakly localized operator.
	
	Let \(z\in \C^n\), using the change of variables \(\zeta=z-w\) and the spherical coordinates, it holds
	\begin{eqnarray*}
		\int_{|z-w|\ge r} |\langle Bk_z, k_w \rangle dV(w) &\le& C \int_{|z-w|\ge r} \frac{1}{(1+|z-w|)^{\beta}} dV(w) \\
		&=& C \int_{|\zeta|\ge r} \frac{1}{(1+|\zeta|)^{\beta}} dV(\zeta) \\
		&=& C c_n \int_{\rho\ge r} \frac{\rho^{2n-1}}{(1+\rho)^{\beta}} d \rho \\
		&\le& C c_n \int_{\rho\ge r} \frac{\rho^{2n-1}}{\rho^{\beta}} d \rho =  C c_n \int_{\rho\ge r} \frac{1}{\rho^{\beta-2n+1}} d \rho \\
		&=& \frac{Cc_n}{\beta-2n}\frac{1}{r^{\beta-2n}}~.
	\end{eqnarray*}
	It follows that,
	\begin{equation*}
		\lim_{r\to\infty} \sup_{z\in \C^n} \int_{|z-w|\ge r} |\langle Bk_z, k_w \rangle dV(w) \le \frac{Cc_n}{\beta-2n} \lim_{r\to\infty} \frac{1}{r^{\beta-2n}} =0~.
	\end{equation*}
	Using the duality relation (\ref{R1}) of an admissible operator and similarly like with \(B\), we also have:
	\begin{equation*}
		\lim_{r\to\infty} \sup_{z\in \C^n} \int_{|z-w|\ge r} |\langle B^*k_z, k_w \rangle dV(w) \le \frac{Cc_n}{\beta-2n} \lim_{r\to\infty} \frac{1}{r^{\beta-2n}} =0~.
	\end{equation*}
\end{proof}
	
	We denote by \(\mathcal{WL}\) the collection of weakly localized operators on \(H^{2}(\C^n,d\mu)\). We recall the following definition from \cite{zhu2007}.
	
		\begin{defn}
		A \textbf{Banach algebra} is a complex algebra together with a complete norm satisfying the condition \(\norm{xy} \le \norm{x}\norm{y}\).
		A \(C^{*}\)\textbf{-algebra} is a Banach algebra \(\mathcal{A}\) with an involution \(x \mapsto x^{*}\) on it satisfying the following conditions:
		\begin{enumerate}[label=$\blacktriangleright$]
			\item \(x^{**}=x\) for all vectors \(x \in \mathcal{A} \).
			\item \((ax + by)^{*} = \bar{a}x^{*} + \bar{b}y^{*} \) for all vectors \(x,y \in \mathcal{A}\) and \(a,b \in \C\).
			\item \((xy)^{*}=y^{*}x^{*}\) for all vectors \(x,y \in \mathcal{A}\).
			\item \(\norm{xx^{*}} = \norm{x}^{2}\) for all vector \(x \in \mathcal{A}\).
		\end{enumerate}
	\end{defn}
	
	\begin{defn}
		A Banach algebra \(\mathcal{A}\) is called a \textbf{\(\star\)-algebra} if for every \(A\in \mathcal{A}\), we have \(A^{*} \in \mathcal{A} \).
	\end{defn}
	
	\begin{defn}
		We denote by \(C^{*}(\mathcal{WL})\) the \(C^{*}\)-algebra generated by weakly localized operators on \(H^{2}(\C^n, d\mu)\). Also, \(C^{*}(\mathcal{WL})\) is actually the norm closure of \(\mathcal{WL}\) since \(\mathcal{WL} \) is a \(\star\)-algebra.
	\end{defn}
	
	We will prove the following main result.
	\begin{thm}\label{XiaFock}
		We have 
		\begin{equation*}
			C^{*} (\mathcal{WL})=\mathcal{T}^{(1)}.
		\end{equation*}
	\end{thm}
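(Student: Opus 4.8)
The plan is to prove the two inclusions $\mathcal{T}^{(1)}\subseteq C^{*}(\mathcal{WL})$ and $C^{*}(\mathcal{WL})\subseteq\mathcal{T}^{(1)}$ separately. The first is immediate from what precedes: by Example~\ref{Ex1} every Toeplitz operator $T_{f}$ with $f\in L^{\infty}(\C^{n},dV)$ is XZ-sufficiently localized, hence weakly localized by the Proposition above, so $\{T_{f}:f\in L^{\infty}(\C^{n},dV)\}\subseteq\mathcal{WL}\subseteq C^{*}(\mathcal{WL})$, and since $C^{*}(\mathcal{WL})$ is closed in operator norm, passing to closures gives $\mathcal{T}^{(1)}\subseteq C^{*}(\mathcal{WL})$. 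For the reverse inclusion, recall that $C^{*}(\mathcal{WL})$ is the operator-norm closure of $\mathcal{WL}$ and that $\mathcal{T}^{(1)}$ is itself norm-closed; hence it suffices to prove $\mathcal{WL}\subseteq\mathcal{T}^{(1)}$, i.e.\ that every weakly localized $B$ is an operator-norm limit of Toeplitz operators with bounded symbols.

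\emph{Step 1: localizing a weakly localized operator.} Fix $B\in\mathcal{WL}$ and, using the tiling $\C^{n}=\bigsqcup_{u\in\Z^{2n}}(S+u)$, set for $R>0$
\[
 B_{R}=\sum_{\substack{u,v\in\Z^{2n}\\ |u-v|\le R}}T_{\mathbf{1}_{S+u}}\,B\,T_{\mathbf{1}_{S+v}},
\]
the series converging in the strong operator topology since the $\mathbf{1}_{S+u}$ partition $\C^{n}$. I would show $\norm{B-B_{R}}\to 0$ as $R\to\infty$ by a Schur test, carried out after passing to $L^{2}(\C^{n},d\mu)$ on which the multiplications $M_{\mathbf{1}_{S+u}}$ are mutually orthogonal projections summing to the identity: the block norms $\norm{T_{\mathbf{1}_{S+u}}BT_{\mathbf{1}_{S+v}}}$ are controlled, up to a dimensional constant, by $\int\!\int\varphi_{u}(w)\,|\langle Bk_{z},k_{w}\rangle|\,\varphi_{v}(z)\,dV(z)\,dV(w)$, where $\varphi_{u}(z)\asymp e^{-\mathrm{dist}(z,\,S+u)^{2}/2}$ reflects the Gaussian concentration $\norm{\mathbf{1}_{S+u}k_{z}}_{L^{2}(d\mu)}^{2}\le\pi^{-n}\int_{S+u}e^{-|z-\xi|^{2}}dV(\xi)$ computed as in the proof of Example~\ref{Ex1}. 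The two ``$\sup_{z}\int$'' conditions defining weak localization provide the finite Schur majorants, while the two ``$\lim_{r\to\infty}\sup_{z}\int_{|z-w|\ge r}$'' conditions force the contribution of the indices with $|u-v|>R$ to tend to $0$. The same estimate, for $R$ fixed, yields $|\langle B_{R}k_{z},k_{w}\rangle|\le C_{R}(1+|z-w|)^{-\beta}$ for every $\beta>2n$ (the Gaussian decay beats any polynomial, and on the bounded range $|z-w|\lesssim R$ the entry is simply bounded): $B_{R}$ is XZ-sufficiently localized. So $B$ lies in the norm closure of the XZ-sufficiently localized operators, and the proof reduces to showing that this class sits inside $\mathcal{T}^{(1)}$.

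\emph{Step 2 (the main obstacle): every XZ-sufficiently localized operator lies in $\mathcal{T}^{(1)}$.} An easy preliminary is that $\mathcal{T}^{(1)}$ contains the ideal $\mathcal{K}$ of compact operators: for fixed $a\in\C^{n}$ the rank-one operator $k_{a}\otimes k_{a}$ is the operator-norm limit, as the Euclidean ball $U$ about $a$ shrinks to $\{a\}$, of $\pi^{n}|U|^{-1}T_{\mathbf{1}_{U}}$ (estimate the self-adjoint difference's quadratic form using the local Lipschitz continuity of $z\mapsto k_{z}$), and $\overline{\mathrm{span}}\{k_{a}\otimes k_{a}:a\in\C^{n}\}=\mathcal{K}$ because a trace-class operator annihilating every $k_{a}\otimes k_{a}$ has identically vanishing Berezin transform, hence is $0$; since $\mathcal{T}^{(1)}$ is a closed subspace, $\mathcal{K}\subseteq\mathcal{T}^{(1)}$. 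This is not enough by itself, since XZ-sufficiently localized operators need not be compact — the Weyl translation unitaries $W_{a}$, with $|\langle W_{a}k_{z},k_{w}\rangle|=e^{-|z+a-w|^{2}/2}$, already show this. The remaining, genuinely harder, point, which I expect to be the crux of the whole argument, is to place the finitely-banded operators $B_{R}$ directly in $\mathcal{T}^{(1)}$. My plan is to group the finitely many shifts $\delta=u-v$ and, for each single-shift sum $C_{\delta}=\sum_{v\in\Z^{2n}}T_{\mathbf{1}_{S+v+\delta}}BT_{\mathbf{1}_{S+v}}$, construct an explicit sequence of bounded symbols $g_{N}$ — built from the sampled matrix data $\langle Bk_{c_{u}},k_{c_{v}}\rangle$ at the lattice centres $c_{u}$, refined over scales $1/N$ — with $T_{g_{N}}\to C_{\delta}$ in operator norm; the off-diagonal decay of $\langle Bk_{z},k_{w}\rangle$ is what makes the errors (interactions between distant blocks, and discretization) summable and uniformly small. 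Granting this, an $\varepsilon/2$-plus-$\varepsilon/2$ argument — truncate $B$ to $B_{R}$ within $\varepsilon/2$, then approximate $B_{R}$ within $\varepsilon/2$ by a bounded-symbol Toeplitz operator — gives $\mathcal{WL}\subseteq\mathcal{T}^{(1)}$ and completes the proof of Theorem~\ref{XiaFock}.
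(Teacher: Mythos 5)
Your first inclusion and your overall reduction are fine, and your Step 1 (band truncation $B_R=\sum_{|u-v|\le R}T_{\mathbf{1}_{S+u}}BT_{\mathbf{1}_{S+v}}$ with a block Schur test on $L^{2}(\C^n,d\mu)$) is a plausible, if unverified, substitute for the paper's decomposition $B=\int_S\int_S E_wBE_z\,dV(w)\,dV(z)$ with $E_z=\pi^{-n}\sum_u k_{u-z}\otimes k_{u-z}$, followed by the band truncation of $E_wBE_z$ via Lemma \ref{Lim=0}. Both routes use weak localization only to discard the far-off-diagonal part, so up to this point the two arguments are morally parallel.

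The genuine gap is Step 2, and you have flagged it yourself: the assertion that each banded piece (your single-shift sums $C_\delta$, or equivalently operators of the form $\sum_u c_u\,k_u\otimes k_{\gamma(u)}$ with $|u-\gamma(u)|$ bounded) lies in $\mathcal{T}^{(1)}$ is exactly the theorem's hard content, and your proposal offers only a plan (``construct bounded symbols $g_N$ sampled from $\langle Bk_{c_u},k_{c_v}\rangle$ \dots granting this'') with no construction and no estimate. Note that $\mathcal{T}^{(1)}$ is the norm closure of the \emph{set} of Toeplitz operators with bounded symbols, not of the algebra they generate, so you must norm-approximate a shifted lattice sum by genuine single Toeplitz operators; there is no obvious reason naive sampling does this, and your own example of the Weyl translations (XZ-sufficiently localized, non-compact) is precisely an operator whose membership in $\mathcal{T}^{(1)}$ already requires the full machinery. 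In the paper this is carried by Propositions \ref{YzT1}, \ref{PropUuKzT1}, \ref{ProSumT1} and \ref{DoT1}: the diagonal case $\sum_u c_u k_{u-z}\otimes k_{u-z}$ is realized as a norm limit of $T_{f_\varepsilon}$ with $f_\varepsilon$ a bounded sum of indicators by averaging $Y_z$ over small balls; the off-diagonal rank-one sums $\sum_u c_u k_u\otimes k_{u-w}$ are then reached by an induction on the monomial degree $|\alpha|$ (differentiating $t\mapsto\sum_u c_u(U_uK_{z+tb})\otimes(U_uK_{z+tb;a})$ inside $\mathcal{T}^{(1)}$), the exponential series $K_w=\sum_j g_w^j/j!$, and the factorizations through $\sum_u(U_uh_u)\otimes e_u$ controlled by Lemma \ref{LemUh}; finally a compactness/uniform-continuity partition handles a general bounded displacement $\gamma$. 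Nothing in your proposal substitutes for this chain, so as it stands the proof is incomplete at its crux; also, your Step 1 claims (the block Schur bound for $\norm{B-B_R}$ and the Gaussian estimate making $B_R$ XZ-sufficiently localized) are stated without proof and would need to be written out, though they are routine compared with Step 2.
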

	
	The organization of this paper is as follows. In Section 2, we will give propositions in the case of the Fock space \(H^{2}(\C^n, d\mu)\) which are the analogue of those given by Xia in \cite{xia2015localization} in the Bergman space case of the unit ball. Later, using these propositions in Section 3, we establish the proof of Theorem \ref{XiaFock} and present a consequence.

	\section{Preliminary Results}
	
	In this section, we will present results that will be used to establish the proof of Theorem \ref{XiaFock}. 
	
	\begin{pro}
		The set \(\mathcal{WL}\) is a \(\star\)-algebra.
	\end{pro}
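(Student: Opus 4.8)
The statement asks, concretely, for the three closure properties of a $*$-stable subalgebra of $\mathcal{B}(H^{2}(\C^{n},d\mu))$, which are exactly what is invoked afterwards to identify $C^{*}(\mathcal{WL})$ with the norm closure of $\mathcal{WL}$: stability under linear combinations, under the adjoint $B\mapsto B^{*}$, and under composition. Two of the three are essentially free. Stability under the adjoint is immediate because the four conditions defining weak localization are symmetric in $B$ and $B^{*}$, so that $B\in\mathcal{WL}$ with associated adjoint $B^{*}$ forces $B^{*}\in\mathcal{WL}$ with associated adjoint $B$. For linear combinations, from $\langle(\lambda B_{1}+B_{2})k_{z},k_{w}\rangle=\lambda\langle B_{1}k_{z},k_{w}\rangle+\langle B_{2}k_{z},k_{w}\rangle$ and the analogous identity for the adjoints, the triangle inequality shows that each of the four defining quantities for $\lambda B_{1}+B_{2}$ is dominated by $|\lambda|$ times that for $B_{1}$ plus that for $B_{2}$; so for the first two conditions it is finite, and for the last two it tends to $0$ as $r\to\infty$. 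I would also record at the outset the standard fact that the first two conditions make $B$ extend to a bounded operator on $H^{2}(\C^{n},d\mu)$ (a Schur test on the kernel $\langle Bk_{z},k_{w}\rangle$ in the coherent state picture), so that products and Hilbert-space adjoints are meaningful and the Hilbert-space adjoint agrees on $\mathcal{S}$ with the admissibility adjoint $B^{*}$.

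The real content is stability under composition, and the key tool is the reproducing identity
\begin{equation*}
	\langle f,g\rangle=\pi^{-n}\int_{\C^{n}}\langle f,k_{u}\rangle\langle k_{u},g\rangle\,dV(u),\qquad f,g\in H^{2}(\C^{n},d\mu),
\end{equation*}
which is immediate from $\langle f,k_{u}\rangle=f(u)e^{-|u|^{2}/2}$ and $d\mu(u)=\pi^{-n}e^{-|u|^{2}}\,dV(u)$. Given $B_{1},B_{2}\in\mathcal{WL}$, I would apply it with $f=B_{2}k_{z}$ and $g=B_{1}^{*}k_{w}$ and use the admissibility relation $\langle k_{u},B_{1}^{*}k_{w}\rangle=\langle B_{1}k_{u},k_{w}\rangle$ to get the reproducing formula
\begin{equation*}
	\langle B_{1}B_{2}k_{z},k_{w}\rangle=\pi^{-n}\int_{\C^{n}}\langle B_{2}k_{z},k_{u}\rangle\langle B_{1}k_{u},k_{w}\rangle\,dV(u),
\end{equation*}
whence $|\langle B_{1}B_{2}k_{z},k_{w}\rangle|\le\pi^{-n}\int_{\C^{n}}|\langle B_{2}k_{z},k_{u}\rangle|\,|\langle B_{1}k_{u},k_{w}\rangle|\,dV(u)$.

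From this pointwise bound the finite-supremum conditions for $B_{1}B_{2}$ are routine: integrating in $w$ and applying Tonelli gives $\int_{\C^{n}}|\langle B_{1}B_{2}k_{z},k_{w}\rangle|\,dV(w)\le\pi^{-n}M_{1}\int_{\C^{n}}|\langle B_{2}k_{z},k_{u}\rangle|\,dV(u)\le\pi^{-n}M_{1}M_{2}$, where $M_{i}=\sup_{z}\int_{\C^{n}}|\langle B_{i}k_{z},k_{w}\rangle|\,dV(w)<\infty$; the condition for the adjoint follows identically from the reproducing formula for $(B_{1}B_{2})^{*}=B_{2}^{*}B_{1}^{*}$ and the suprema attached to $B_{1}^{*},B_{2}^{*}$. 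For the two vanishing-tail conditions I would split the $u$-integral via the dichotomy ``$|z-w|\ge r$ implies $|z-u|\ge r/2$ or $|u-w|\ge r/2$'': on the first region bound the $B_{1}$-factor by its full $\C^{n}$-integral ($\le M_{1}$) and keep the $B_{2}$-factor restricted to $|z-u|\ge r/2$; on the second region bound the $B_{2}$-factor by $M_{2}$ and keep the $B_{1}$-factor restricted to $|u-w|\ge r/2$. After Tonelli this gives
\begin{equation*}
	\sup_{z\in\C^{n}}\int_{|z-w|\ge r}|\langle B_{1}B_{2}k_{z},k_{w}\rangle|\,dV(w)\le\pi^{-n}\bigl(M_{1}\,\varepsilon_{B_{2}}(r/2)+M_{2}\,\varepsilon_{B_{1}}(r/2)\bigr),
\end{equation*}
where $\varepsilon_{B}(\rho)=\sup_{z}\int_{|z-w|\ge\rho}|\langle Bk_{z},k_{w}\rangle|\,dV(w)$; the right-hand side is independent of $z$ and tends to $0$ as $r\to\infty$ because $\varepsilon_{B_{1}},\varepsilon_{B_{2}}\to0$. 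The fourth condition follows by the same computation with $B_{1}^{*},B_{2}^{*}$ in place of $B_{1},B_{2}$, and admissibility of $B_{1}B_{2}$ is witnessed by $B_{2}^{*}B_{1}^{*}$ together with $\langle B_{1}B_{2}k_{z},k_{w}\rangle=\langle k_{z},B_{2}^{*}B_{1}^{*}k_{w}\rangle$; hence $B_{1}B_{2}\in\mathcal{WL}$. The conceptual crux is the reproducing formula for $\langle B_{1}B_{2}k_{z},k_{w}\rangle$; once it is in hand everything reduces to Tonelli and the triangle inequality on nonnegative integrands, and the only point to monitor carefully is that all constants stay uniform in $z$.
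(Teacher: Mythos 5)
Your proposal is correct and follows essentially the same route as the paper: the same reproducing-kernel expansion of \(\langle B_{1}B_{2}k_{z},k_{w}\rangle\) via admissibility, the same \(r/2\)-splitting of the intermediate integral for the vanishing-tail conditions, and the same symmetry/triangle-inequality observations for adjoints and linear combinations. Your added remark that the first two conditions yield boundedness (so that products and the Hilbert-space adjoint are meaningful) is a small refinement the paper leaves implicit, but it does not change the argument.
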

	
	\begin{proof}
		From the definition of weakly localized operator on \(H^{2}(\C^n, d\mu)\), we know that if \(B \in \mathcal{WL}\), then also is \(B^{*}\). Moreover every linear combination of two operators in \(\mathcal{WL}\) is also in \(\mathcal{WL}\). Therefore, to complete the proof, we just have to prove that if \(B_{1}, B_{2} \in \mathcal{WL} \), then \(B_{1}B_{2} \in \mathcal{WL} \). 
		
		Let \(B_{1}, B_{2} \in \mathcal{WL}\), we denote indistinguishably by \(C\) the constant satisfying:
		\begin{equation*}
			\sup_{z\in \C^n} \int_{\C^{n}} |\langle B_{j}k_{z}, k_{w} \rangle | dV(w) < C \quad   \text{ and } \quad \sup_{z\in \C^n} \int_{\C^{n}} |\langle B^{*}_{j}k_{z}, k_{w} \rangle | dV(w) < C,
		\end{equation*}
		for \(j=1,2\). Let \(z\in \C^n \), we have
		\begin{eqnarray*}
			\int_{\C^{n}}| \langle B_{1} B_{2} k_{z}, k_{w} \rangle | dV(w) &=&  \int_{\C^{n}} |\langle B_{2} k_{z} , B^{*}_{1}k_{w} \rangle | dV(w) = \int_{\C^{n}} \left|\int_{\C^{n}} B_{2}k_{z} (\xi) \overline{B^{*}_{1}k_{w}(\xi) } d\mu(\xi) \right| dV(w) \\
			&= & \frac{1}{\pi^n} \int_{\C^{n}} \left|\int_{\C^{n}} \langle B_{2}k_{z} , k_{\xi} \rangle \langle k_{\xi}, B^{*}_{1} k_{w} \rangle  dV(\xi) \right| dV(w) \\
			&\le & \frac{1}{\pi^n} \int_{\C^{n}} \left(\sup_{\xi\in \C^n }\int_{\C^{n}} |\langle B_{1}k_{\xi}, k_{w}\rangle | dV(w)\right)  |\langle B_{2}k_{z}, k_{\xi} \rangle | dV(\xi) \\
			&<& C   \int_{\C^{n}} |\langle B_{2}k_{z}, k_{\xi} \rangle | dV(\xi)~.
		\end{eqnarray*}
		Hence, we have 
		\begin{equation*}
			\sup_{z\in \C^n} \int_{\C^{n}}| \langle B_{1} B_{2} k_{z}, k_{w} \rangle | dV(w) < C \sup_{z\in \C^n} \int_{\C^{n}} |\langle B_{2}k_{z}, k_{\xi} \rangle | dV(\xi) < C^{2} < \infty~.
		\end{equation*}
		We also have
		\begin{eqnarray}\label{weakproof}
			\int_{|z-w|\ge r} | \langle B_{1} B_{2} k_{z}, k_{w} \rangle | dV(w) &\le & \frac{1}{\pi^n} \int_{|z-w|\ge r} \int_{\C^{n}} |\langle B_{2}k_{z} , k_{\xi} \rangle| |\langle k_{\xi}, B^{*}_{1} k_{w} \rangle|  dV(\xi) dV(w) \nonumber \\
			&=& \frac{1}{\pi^n} \int_{\C^{n}} \left( \int_{|z-w|\ge r} |\langle B_{2}k_{z} , k_{\xi} \rangle| |\langle B_{1} k_{\xi},  k_{w} \rangle| dV(w) \right) dV(\xi) \nonumber \\
			&=& \frac{1}{\pi^n} \int_{|z-\xi|< \frac{r}{2}} I_{z}(\xi) dV(\xi) + \frac{1}{\pi^n} \int_{|z-\xi|\ge \frac{r}{2}} I_{z}(\xi) dV(\xi)~,
		\end{eqnarray}
		where \( I_{z} (\xi) = \int_{|z-w|\ge r} |\langle B_{2}k_{z} , k_{\xi} \rangle| |\langle B_{1}k_{\xi}, k_{w} \rangle| dV(w)\) . 
		
		For \(\xi \in B(z,\frac{r}{2}) \), we have \(B(\xi, \frac{r}{2}) \subset B(z,r) \) and hence, \(B(z,r)^c \subset B(\xi,\frac{r}{2})^{c} \). Therefore we can dominate the first integral as follows
		\begin{eqnarray*}
			\int_{|z-\xi|< \frac{r}{2}} I_{z}(\xi) dV(\xi)  &\le & \int_{|z-\xi|< \frac{r}{2}} \int_{|\xi-w| \ge \frac{r}{2}} |\langle B_{2}k_{z} , k_{\xi} \rangle| |\langle k_{\xi}, B^{*}_{1} k_{w} \rangle| dV(w)dV(\xi) \\
			&\le & \int_{|z-\xi|< \frac{r}{2}} \left( \sup_{\xi\in \C^n }\int_{|\xi-w| \ge \frac{r}{2}}  |\langle B_{1}k_{\xi}, k_{w} \rangle| dV(w)  \right) |\langle B_{2}k_{z} , k_{\xi} \rangle| dV(\xi) \\
			&=& C\left(\frac{r}{2}\right) \int_{|z-\xi|< \frac{r}{2}} |\langle B_{2}k_{z} , k_{\xi} \rangle| dV(\xi) \le C\left(\frac{r}{2}\right) \int_{\C^n} |\langle B_{2}k_{z} , k_{\xi} \rangle| dV(\xi) ~,
		\end{eqnarray*}
		where \( C(r) = \sup_{\xi\in \C^n}\int_{|\xi-w|\ge r} |\langle B_{2}k_{\xi}, k_{w} \rangle| dV(w)  \). Taking the supremum on \(z \in \C^n\), we have
		\begin{equation*}
			\sup_{z\in \C^n} \int_{|z-\xi|< \frac{r}{2}} I_{z}(\xi) dV(\xi) \le C\left(\frac{r}{2}\right) \sup_{z\in \C^n} \int_{\C^n} |\langle B_{2}k_{z} , k_{\xi} \rangle| dV(\xi) <C~ C\left(\frac{r}{2}\right)~,
		\end{equation*}
		which tends to \(0\) as \(r\) goes to \(\infty\) from the third property of weakly localized operators.
		
		On the other hand, the second integral in relation (\ref{weakproof}) can be dominated as follows
		\begin{eqnarray*}
			\int_{|z-\xi|\ge \frac{r}{2}} I_{z}(\xi) dV(\xi) &\le&   \int_{|z-\xi|\ge\frac{r}{2} } \int_{\C^n}  |\langle B_{1} k_{\xi},  k_{w} \rangle|  dV(w)~ |\langle B_{2}k_{z} , k_{\xi} \rangle| dV(\xi) \\
			&\le & \int_{|z-\xi|\ge\frac{r}{2} } \left( \sup_{\xi \in \C^n} |\langle B_{1} k_{\xi},  k_{w} \rangle|  dV(w) \right)|\langle B_{2}k_{z} , k_{\xi} \rangle| dV(\xi) \\
			&<& C\int_{|z-\xi|\ge\frac{r}{2} }|\langle B_{2}k_{z} , k_{\xi} \rangle| dV(\xi) ~.
		\end{eqnarray*}
		It follows from the third property of weakly localized operators that
		\begin{equation*}
			\lim_{r\to\infty} \sup_{z\in \C^n} \int_{|z-\xi|\ge \frac{r}{2}} I_{z}(\xi) dV(\xi) \le C \lim_{r\to\infty} \sup_{z\in \C^n} \int_{|z-\xi|\ge\frac{r}{2} }|\langle B_{2}k_{z} , k_{\xi} \rangle| dV(\xi) =0~.
		\end{equation*}
		Whence 
		\begin{equation*}
			\lim_{r\to\infty} \sup_{z\in \C^n} \int_{|z-w|\ge r} | \langle B_{1} B_{2} k_{z}, k_{w} \rangle | dV(w) =0 ~.
		\end{equation*}
		We proved the corresponding conditions for \((B_{1}B_{2})^{*}\) in the same way. This finishes the proof.
	\end{proof}
	
	Let \(A\) be a bounded linear operator on a Hilbert space \(H\). We recall (see\cite{zhu2007}) that, if \(A\) is a self-adjoint operator, then 
	\begin{equation*}
		\norm{A^{*}}=\norm{A} = \sup\{|\langle Ax, x \rangle|: \norm{x}=1 \}.
	\end{equation*}
	
	\begin{defn}\label{normstar}
		For an entire function \(h\) in \(\C^{n}\), we write
		\begin{equation*}
			\norm{h}_{*} = \left(\int_{\C^{n}} |h(\zeta)|^{2} e^{-\frac{1}{2}|\zeta|^2} dV(\zeta)\right)^{\frac{1}{2}}~.
		\end{equation*}
		We denote by \(\mathcal{H}_{*}\) the collection of entire functions \(h\) on \(\C^n\) satisfying \(\norm{h}_{*}<\infty\).
	\end{defn}
	\begin{rem}
		The norm \(\norm{\cdot}_{*}\) is equivalent to the norm on the Fock space \(F^{2}_{1/2}\) given in relation  (\ref{Fnorm1}) which is an Hilbert space. More precisely, \(\norm{\cdot}_{*} = (2\pi)^{n/2} \norm{\cdot}_{F^{2}_{1/2}}\). This ensures the continuity of \(\norm{\cdot}_{*}\).
	\end{rem}
	In what follows, we will use the operator \(U_{z}\) defined by 
	\begin{equation*}
		U_{z}f(w) = f(z-w) k_{z}(w)~, \quad f \in H^{2}(\C^n, d\mu)~.
	\end{equation*} 
	
	For any \(f,g \in  H^{2}(\C^n,d\mu)\), let  \(f\otimes g \) be \textbf{the standard tensor product operator } on \(H^{2}(\C^n,d\mu)\) defined by
	\begin{equation}\label{DefOtimes}
		\left(f\otimes g\right)(\cdot)  = \langle \cdot, g \rangle f~.
	\end{equation}
	
	\newpage
	
	\begin{pro}\label{PropPre}
		\phantom{x}	\begin{enumerate}[label=(\alph*)]
			\item \label{itm:first} For \(u \in \Z^{2n}\), \(z\in \C^n\) we have
			\begin{equation}\label{Uu1}
				U_{u}k_{z} = k_{u-z} e^{iIm\langle u, z \rangle}.
			\end{equation}
			Furthermore, we have
			\begin{equation}\label{Uu2}
				U_{u}k_{z} \otimes U_{u}k_{z} = k_{u-z} \otimes k_{u-z}\quad, \quad U_{u}K_{z} \otimes U_{u}K_{z} =e^{|z|^{2}} k_{u-z} \otimes k_{u-z}.
			\end{equation}
			\item \label{itm:second}  For \(f \in L^{\infty}(\C^n, dV)\), we have the following representation for the Toeplitz operator \(T_{f}\)
			\begin{equation}\label{Toef1}
				T_{f} = \frac{1}{\pi^n} \int_{\C^n} f(w) k_{w} \otimes k_{w} ~dV(w).
			\end{equation}
			\item \label{itm:third}  The identity operator \(I_d\) on \(H^{2}(\C^n, d\mu)\) can be expressed as follows:
			\begin{equation*}
				I_{d_{H^2\to H^2}} = \frac{1}{\pi^n} \int_{\C^{n}} k_{z} \otimes k_{z} dV(z) = \int_{S} E_{z} dV(z)~,
			\end{equation*}
			where 
			\begin{equation}\label{Ez1}
				E_{z} = \frac{1}{\pi^n}\sum_{u \in \Z^{2n}} k_{u-z} \otimes k_{u-z}~, \quad  z \in S.
			\end{equation}
			\item  For every \(z\in \C^n\), it holds
			\begin{equation}\label{Limkzkw}
				\lim_{w\to z} \norm{k_{z}-k_{w}}_{*} = 0~.
			\end{equation}
		\end{enumerate}
	\end{pro}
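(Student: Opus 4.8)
I will treat the four parts in order; all are explicit computations.

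For \ref{itm:first} the plan is to unwind the definition $U_{u}f(w)=f(u-w)k_{u}(w)$, so $U_{u}k_{z}(w)=k_{z}(u-w)k_{u}(w)$. Substituting $k_{z}(u-w)=e^{\langle u-w,z\rangle}e^{-|z|^{2}/2}$ and $k_{u}(w)=e^{\langle w,u\rangle}e^{-|u|^{2}/2}$ and comparing with $k_{u-z}(w)=e^{\langle w,u-z\rangle}e^{-|u-z|^{2}/2}$, the quotient $U_{u}k_{z}(w)/k_{u-z}(w)$ becomes, after the identity $|u-z|^{2}=|u|^{2}-2\operatorname{Re}\langle u,z\rangle+|z|^{2}$, the $w$-independent constant $e^{\langle u,z\rangle-\operatorname{Re}\langle u,z\rangle}=e^{i\operatorname{Im}\langle u,z\rangle}$, which is (\ref{Uu1}). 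The identities (\ref{Uu2}) then follow from the elementary rule $(cf)\otimes(cg)=|c|^{2}(f\otimes g)$: apply it with $c=e^{i\operatorname{Im}\langle u,z\rangle}$ (modulus $1$) to get the first, and with $c=e^{|z|^{2}/2}e^{i\operatorname{Im}\langle u,z\rangle}$, using $U_{u}K_{z}=e^{|z|^{2}/2}U_{u}k_{z}$ and $K_{z}=e^{|z|^{2}/2}k_{z}$, to get the second.

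For \ref{itm:second} I would start from formula (\ref{Toepfgh}) for $\langle T_{f}g,h\rangle$ and rewrite its integrand via the reproducing property: $g(\xi)=\langle g,K_{\xi}\rangle$ and $\overline{h(\xi)}=\langle K_{\xi},h\rangle$, so that $f(\xi)g(\xi)\overline{h(\xi)}=f(\xi)\langle(K_{\xi}\otimes K_{\xi})g,h\rangle$. Writing $d\mu(\xi)=\pi^{-n}e^{-|\xi|^{2}}dV(\xi)$ and $K_{\xi}\otimes K_{\xi}=e^{|\xi|^{2}}k_{\xi}\otimes k_{\xi}$ (the second identity of (\ref{Uu2})), the exponential factors cancel and one obtains $\langle T_{f}g,h\rangle=\pi^{-n}\int_{\C^{n}}f(\xi)\langle(k_{\xi}\otimes k_{\xi})g,h\rangle\,dV(\xi)$ for all $g,h$, which is exactly (\ref{Toef1}) read as an identity of bounded sesquilinear forms (a weakly convergent operator-valued integral; the form is integrable because $\langle(k_{\xi}\otimes k_{\xi})g,h\rangle=e^{-|\xi|^{2}}g(\xi)\overline{h(\xi)}$, so Cauchy--Schwarz against $d\mu$ applies). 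Part \ref{itm:third} is then immediate: taking $f\equiv 1$, so $T_{f}=I_{d}$, gives $I_{d}=\pi^{-n}\int_{\C^{n}}k_{z}\otimes k_{z}\,dV(z)$; splitting $\C^{n}=\bigsqcup_{u\in\Z^{2n}}(u-S)$ and substituting $z=u-\zeta$, $\zeta\in S$, in each cell (the volume measure being translation-invariant) turns this into $\pi^{-n}\sum_{u}\int_{S}k_{u-\zeta}\otimes k_{u-\zeta}\,dV(\zeta)$; interchanging the sum of the positive operators $k_{u-\zeta}\otimes k_{u-\zeta}$ with $\int_{S}$ (monotone convergence on the forms $g\mapsto\langle\cdot\,g,g\rangle$) yields $\int_{S}E_{\zeta}\,dV(\zeta)$ with $E_{\zeta}$ as in (\ref{Ez1}).

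For the last part I would use the polarization identity $\norm{k_{z}-k_{w}}_{*}^{2}=\norm{k_{z}}_{*}^{2}-2\operatorname{Re}\langle k_{z},k_{w}\rangle_{*}+\norm{k_{w}}_{*}^{2}$, where $\langle\cdot,\cdot\rangle_{*}$ is the inner product attached to $\norm{\cdot}_{*}$, and compute $\langle k_{z},k_{w}\rangle_{*}$ in closed form. Since the integrand factors over coordinates, the one-variable Gaussian integral $\int_{\C}e^{-a|\zeta|^{2}+b\zeta+c\bar\zeta}\,dA(\zeta)=(\pi/a)e^{bc/a}$ (valid for $\operatorname{Re}a>0$, $dA$ the planar Lebesgue measure) applies with $a=1/2$, giving $\langle k_{z},k_{w}\rangle_{*}=(2\pi)^{n}e^{2\langle w,z\rangle-(|z|^{2}+|w|^{2})/2}$ and in particular $\norm{k_{z}}_{*}^{2}=(2\pi)^{n}e^{|z|^{2}}$. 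The right-hand side of the polarization identity is then a continuous function of $(z,w)$ vanishing on the diagonal, so it tends to $0$ as $w\to z$. Alternatively one can apply dominated convergence directly to $\int_{\C^{n}}|k_{z}(\zeta)-k_{w}(\zeta)|^{2}e^{-|\zeta|^{2}/2}\,dV(\zeta)$, using the pointwise identity $|k_{w}(\zeta)|^{2}e^{-|\zeta|^{2}/2}=e^{|\zeta|^{2}/2-|\zeta-w|^{2}}$ to produce a fixed integrable majorant valid for all $w$ in a bounded neighbourhood of $z$.

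None of these steps is genuinely hard. The only places needing care are the precise sense in which (\ref{Toef1}) and the identity-operator formula hold --- namely weak (form) convergence of an operator-valued integral rather than norm or Bochner convergence, since $\norm{k_{\xi}\otimes k_{\xi}}\equiv1$ --- and the sum--integral interchange in \ref{itm:third}, for which positivity of $k_{\zeta}\otimes k_{\zeta}$ together with monotone convergence suffices. That bookkeeping is the main thing to pin down; the remainder is Gaussian calculus.
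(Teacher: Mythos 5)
Your proposal is correct and follows essentially the same route as the paper: the same direct computation for (\ref{Uu1})--(\ref{Uu2}), the weak/pointwise rewriting of \(\langle T_{f}g,h\rangle\) for (\ref{Toef1}), the tiling \(\C^{n}=\cup_{u}(u-S)\) with the change of variables for (c), and a norm-expansion argument for (d). Your treatment is in fact slightly more careful than the paper's in two spots --- the explicit Gaussian evaluation of \(\langle k_{z},k_{w}\rangle_{*}\) (the paper only asserts the limit, with a small constant slip) and the justification of the sum--integral interchange defining \(E_{z}\) --- but these are refinements of the same argument, not a different one.
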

	\begin{proof}
		\begin{enumerate}[label=(\alph*)]
			\item For all \(\xi \in \C^n \), it holds
			\begin{eqnarray*}
				U_{u}k_{z}(\xi) &=& k_{z}(u-\xi) k_{u}(\xi) = e^{\langle u-\xi,z \rangle - \frac{1}{2}|z|^{2}} e^{\langle \xi, u \rangle -\frac{1}{2}|u|^{2}} \\
				&=& e^{-\frac{1}{2}|u-z|^{2}} e^{ \langle \xi,u-z \rangle} e^{i Im\langle u,z \rangle } = k_{u-z}(\xi) e^{iIm\langle u,z \rangle}~.
			\end{eqnarray*}
			The relations in (\ref{Uu2}) follow directly from (\ref{Uu1}) and the fact that \(K_{z}=k_{z} e^{\frac{1}{2}|z|^{2}}\).
			\item For \(h\in H^{2}(\C^n, d\mu) \), we have
			\begin{eqnarray*}
				T_{f}(h)(\xi) &=& \int_{\C^n} f(w) h(w) K(\xi,w) d\mu(w) = \frac{1}{\pi^n} \int_{\C^n} f(w) \langle h, k_{w} \rangle k_{w}(\xi) dV(w) \\
				&=& \frac{1}{\pi^n} \int_{\C^n} f(w)( ( k_{w} \otimes k_{w}) h )(\xi) dV(w).
			\end{eqnarray*}
			\item Let \(f\in H^{2}(\C^n, d\mu) \), by the reproducing property, it holds
			\begin{eqnarray*}
				f(z) &=& \langle f , K_{z} \rangle = \int_{\C^{n}} f(w) K(z,w) d \mu(w) \\
				&=& \frac{1}{\pi^n} \int_{\C^{n}} \langle f, k_{w} \rangle k_{w}(z) dV(w) \\
				&=& \frac{1}{\pi^n} \int_{\C^{n}} ((k_w \otimes k_{w}) f )(z) dV(w)~.
			\end{eqnarray*}
			This combined with the change of variables \(w=u-\xi\) leads to
			\begin{eqnarray*}
				I_{d_{H^2\to H^2}} &=& \frac{1}{\pi^n} \int_{\C^{n}} k_{w} \otimes k_{w} ~ dV(w) = \frac{1}{\pi^n} \sum_{u \in \Z^{2n}} \int_{u-S} k_{w} \otimes k_{w}~ dV(w) \\
				&=& \frac{1}{\pi^n} \sum_{u \in \Z^{2n}} \int_{S} k_{u-\xi} \otimes k_{u-\xi} dV(\xi) = \int_{S} E_{\xi}~ dV(\xi)~.
			\end{eqnarray*}
			\item For \(z\in \C^n\), considering the inner product and the norm in \(F^{2}_{1/2}\), we have 
			\begin{eqnarray*}
				\norm{k_z-k_{w}}^{2}_{*} &=& (2\pi)^{n/2} \left(\norm{k_z}^2 + \norm{k_{w}}^2 - 2Re \langle k_{z}, k_{w} \rangle \right) \\
				&=& (2\pi)^{n/2} \left(e^{\frac{|z|^2}{2}}+e^{\frac{|w|^{2}}{2}} -2Re \langle k_{z}, k_{w} \rangle \right) \underset{w\to z}{\rightarrow }0~.
			\end{eqnarray*}
		\end{enumerate}
	\end{proof}
	
	\begin{rem}
		From point \ref{itm:third} in Proposition \ref{PropPre} , we deduce that for all \(B \in \mathcal{WL}\), 
		\begin{equation}\label{Bint}
			B = \int_{S} \int_{S} E_{w} B E_{z} dV(w) dV(z)~.
		\end{equation}
		Furthermore, for \(z,w \in S\) and \(B\in \mathcal{WL}\),
		\begin{equation}\label{EwBEz}
			E_{w} BE_{z} = \frac{1}{\pi^{2n}} \sum_{u,v \in \Z^{2n}} \langle Bk_{u-z}, k_{v-w} \rangle k_{v-w} \otimes k_{u-z}~.
		\end{equation}
	\end{rem}
	
	\begin{proof}
		In fact, for \(z,w \in S\) and \(f\in L^{\infty}(\C^n, dV)\), we have
		\begin{eqnarray*}
			E_{w}BE_{z} f (\xi) &=& \frac{1}{\pi^n} \sum_{u \in \Z^{2n}} E_{w} B~ (k_{u-z}\otimes k_{u-z})f(\xi) = \frac{1}{\pi^{n}} \sum_{u \in \Z^{2n}} \langle f,k_{u-z} \rangle  E_{w}B~k_{u-z}(\xi)\\
			&=& \frac{1}{\pi^{2n}} \sum_{u,v \in \Z^{2n}} \langle f, k_{u-z} \rangle (k_{v-w}\otimes k_{v-w})Bk_{u-z}(\xi) \\
			& = &  \frac{1}{\pi^{2n}} \sum_{u,v \in \Z^{2n}} \langle f, k_{u-z} \rangle \langle Bk_{u-z}, k_{v-w} \rangle k_{v-w}(\xi) \\
			&=&  \frac{1}{\pi^{2n}} \sum_{u,v \in \Z^{2n}} \langle Bk_{u-z}, k_{v-w} \rangle (k_{v-w} \otimes k_{u-z})f (\xi)~.
		\end{eqnarray*}
		The relation (\ref{Bint}) is obtained by integrating (\ref{EwBEz}) on \(S\times S\), using the fact that \(\C^{n} = \cup_{u \in \Z^{2n}}\{u-S\}\) and the reproducing kernel property. 
	\end{proof}
	
	In what follows, \(\{e_{u}: u \in \Z^{2n}\}\) is any orthonormal basis in \(H^{2}(\C^n,d\mu)\). Let us recall the discrete version of the Schur test, which will be used several times in this paper.
	\begin{lem}\label{SchurTest}
		Let \(K\) be a kernel on \(\N\times \N\). Suppose that \(K(i,j)\ge 0\) for all \(i,j \in \N\) and that there are constants \(C_{1}, C_{2}\) and sequence of strictly positive numbers \(\{h_{j}\}\) such that 
		\begin{equation*}
			\sum_{j=1}^{\infty} K(i,j) h_{j} \le C_{1}h_{i} \quad \text{and} \quad \sum_{j=1}^{\infty}K(j,i)h_{j} \le C_{2}h_{i}
		\end{equation*}
		for every \(i\in \N\). Then for all \(a=(a_{i})\) and \(b=(b_{i})\) in \(l^{2}(\N)\) we have
		\begin{equation*}
			\sum_{j,i=1}^{\infty} K(i,j)|a_{i}||b_{j}| \le (C_{1}C_{2})^{1/2}\norm{a}\norm{b}.
		\end{equation*}
	\end{lem}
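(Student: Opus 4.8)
The plan is to reduce the estimate to the Cauchy--Schwarz inequality after splitting the weighted kernel. First I would write, for each pair $(i,j)$,
\begin{equation*}
	K(i,j)|a_i||b_j| = \left( K(i,j)^{1/2} \left(\frac{h_j}{h_i}\right)^{1/2} |a_i| \right)\left( K(i,j)^{1/2} \left(\frac{h_i}{h_j}\right)^{1/2} |b_j| \right),
\end{equation*}
which is legitimate since every $h_j$ is strictly positive and $K(i,j)\ge 0$. Summing over $i,j$ and applying the Cauchy--Schwarz inequality on $\ell^2(\N\times\N)$ gives
\begin{equation*}
	\sum_{i,j} K(i,j)|a_i||b_j| \le \left( \sum_{i,j} K(i,j)\frac{h_j}{h_i}|a_i|^2 \right)^{1/2}\left( \sum_{i,j} K(i,j)\frac{h_i}{h_j}|b_j|^2 \right)^{1/2}.
\end{equation*}

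Next I would estimate the two factors separately; since all summands are non-negative, Tonelli's theorem permits reordering the sums freely. For the first factor, summing in $j$ first and invoking the hypothesis $\sum_j K(i,j)h_j \le C_1 h_i$,
\begin{equation*}
	\sum_{i,j} K(i,j)\frac{h_j}{h_i}|a_i|^2 = \sum_i \frac{|a_i|^2}{h_i}\sum_j K(i,j)h_j \le \sum_i \frac{|a_i|^2}{h_i}\,C_1 h_i = C_1\norm{a}^2.
\end{equation*}
For the second factor, summing in $i$ first and using the second hypothesis written (after relabelling the dummy indices) as $\sum_i K(i,j)h_i \le C_2 h_j$,
\begin{equation*}
	\sum_{i,j} K(i,j)\frac{h_i}{h_j}|b_j|^2 = \sum_j \frac{|b_j|^2}{h_j}\sum_i K(i,j)h_i \le \sum_j \frac{|b_j|^2}{h_j}\,C_2 h_j = C_2\norm{b}^2.
\end{equation*}

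Combining these two bounds yields $\sum_{i,j}K(i,j)|a_i||b_j| \le (C_1\norm{a}^2)^{1/2}(C_2\norm{b}^2)^{1/2} = (C_1C_2)^{1/2}\norm{a}\norm{b}$, which is the claim. The only points needing attention are the interchange of summation order — harmless here because the terms are non-negative — and correctly reading the second hypothesis $\sum_j K(j,i)h_j \le C_2 h_i$ in its transposed form; there is no substantive obstacle, so the argument is short and self-contained.
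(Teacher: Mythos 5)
Your proof is correct: the weighted splitting $K(i,j)|a_i||b_j| = \bigl(K(i,j)^{1/2}(h_j/h_i)^{1/2}|a_i|\bigr)\bigl(K(i,j)^{1/2}(h_i/h_j)^{1/2}|b_j|\bigr)$, Cauchy--Schwarz over pairs, Tonelli to reorder, and the transposed reading of the second hypothesis are all handled properly. The paper does not prove this lemma at all --- it is recalled as the standard discrete Schur test --- so your argument simply supplies, correctly, the classical proof the paper leaves to the reader.
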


	\begin{lem}
		There is a constant \(0< C < \infty\), such that \(\norm{E_{z}} \le C \) for every \(z\in S\).
	\end{lem}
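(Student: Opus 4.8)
The plan is to write the positive operator $E_{z}$ as $E_{z}=T_{z}^{*}T_{z}$ for a suitable operator $T_{z}$ into $\ell^{2}(\Z^{2n})$, and then to bound $\norm{T_{z}^{*}}$ uniformly in $z$ by feeding the Gaussian off-diagonal decay of the normalized kernels into the Schur test (Lemma \ref{SchurTest}).

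Concretely, by $(\ref{Ez1})$ the operator $E_{z}$ is a sum of positive rank-one operators, hence positive, and formally $E_{z}f=\pi^{-n}\sum_{u\in\Z^{2n}}\langle f,k_{u-z}\rangle k_{u-z}$. I would introduce $T_{z}\colon H^{2}(\C^n,d\mu)\to\ell^{2}(\Z^{2n})$ by $T_{z}f=\pi^{-n/2}\big(\langle f,k_{u-z}\rangle\big)_{u\in\Z^{2n}}$, whose formal adjoint sends a finitely supported $a=(a_{u})_{u}$ to $T_{z}^{*}a=\pi^{-n/2}\sum_{u}a_{u}k_{u-z}$. A direct computation with inner products gives $E_{z}=T_{z}^{*}T_{z}$, so once $T_{z}$ is known to be bounded we obtain $\norm{E_{z}}=\norm{T_{z}}^{2}=\norm{T_{z}^{*}}^{2}$, and it suffices to bound $\norm{T_{z}^{*}}$ independently of $z\in S$.

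For that, I would first record the elementary identity $|\langle k_{\zeta},k_{\eta}\rangle|=e^{-|\zeta-\eta|^{2}/2}$, which follows from the reproducing property as in Example \ref{Ex1} (write $\langle k_{\zeta},k_{\eta}\rangle=k_{\zeta}(\eta)e^{-|\eta|^{2}/2}$ and expand $|\zeta-\eta|^{2}$). Hence, for finitely supported $a\in\ell^{2}(\Z^{2n})$,
\begin{equation*}
	\norm{T_{z}^{*}a}^{2}=\frac{1}{\pi^{n}}\sum_{u,v\in\Z^{2n}}a_{u}\overline{a_{v}}\,\langle k_{u-z},k_{v-z}\rangle\le\sum_{u,v\in\Z^{2n}}K(u,v)\,|a_{u}|\,|a_{v}|,\qquad K(u,v):=\frac{1}{\pi^{n}}e^{-|u-v|^{2}/2}.
\end{equation*}
Identifying $\Z^{2n}$ with $\N$, I would then apply Lemma \ref{SchurTest} to the nonnegative symmetric kernel $K$ with the constant weight $h_{u}\equiv1$: since
\begin{equation*}
	\sum_{v\in\Z^{2n}}K(u,v)=\frac{1}{\pi^{n}}\sum_{w\in\Z^{2n}}e^{-|w|^{2}/2}=\frac{1}{\pi^{n}}\Big(\sum_{m\in\Z}e^{-m^{2}/2}\Big)^{2n}=:C<\infty
\end{equation*}
is finite and independent of both $u$ and $z$, the Schur test gives $\sum_{u,v}K(u,v)|a_{u}||a_{v}|\le C\norm{a}^{2}_{\ell^{2}}$, whence $\norm{T_{z}^{*}a}^{2}\le C\norm{a}^{2}_{\ell^{2}}$ for all finitely supported $a$. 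Therefore $T_{z}^{*}$ extends to a bounded operator with $\norm{T_{z}^{*}}^{2}\le C$ uniformly in $z$, so $T_{z}=(T_{z}^{*})^{*}$ is bounded, $E_{z}=T_{z}^{*}T_{z}$ is a bounded operator, and $\norm{E_{z}}=\norm{T_{z}^{*}}^{2}\le C$ for every $z\in S$ (in fact for every $z\in\C^n$).

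The individual steps are computational and I expect no genuine obstacle. The only points that require a little care are making the a priori formal identity $E_{z}=T_{z}^{*}T_{z}$ rigorous — handled by working first on finitely supported sequences, extending $T_{z}^{*}$ by continuity, and noting that the resulting series for $E_{z}f$ then converges in $H^{2}(\C^n,d\mu)$ — and the observation that the lattice sum $\sum_{w\in\Z^{2n}}e^{-|w|^{2}/2}$ factors over the $2n$ real coordinates and is thus finite and completely independent of $z$, which is exactly what makes the bound uniform.
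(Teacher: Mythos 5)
Your proof is correct and follows essentially the same route as the paper: both factor \(E_{z}\) as a product (the paper writes \(E_{z}=F_{z}^{*}F_{z}\) with \(F_{z}=\pi^{-n/2}\sum_{u}e_{u}\otimes k_{u-z}\), which is your \(T_{z}\) with the orthonormal set \(\{e_{u}\}\) standing in for \(\ell^{2}(\Z^{2n})\)), use \(|\langle k_{u-z},k_{v-z}\rangle|=e^{-\frac{1}{2}|u-v|^{2}}\), and conclude by the discrete Schur test with constant weights, the lattice Gaussian sum being independent of \(z\). The only cosmetic difference is that the paper passes to \(\norm{F_{z}F_{z}^{*}}\) and estimates a quadratic form, while you bound \(\norm{T_{z}^{*}a}\) directly.
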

	\begin{proof}
		For \(z\in S\), we define the operator 
		\begin{equation*}
			F_{z} = \frac{1}{\pi^{n/2}} \sum_{u \in \Z^{2n}} e_{u} \otimes k_{u-z}.
		\end{equation*}
		Then we have 
		\begin{equation*}
			F^{*}_{z} = \frac{1}{\pi^{n/2}}\sum_{u \in \Z^{2n}} k_{u-z}\otimes e_{u} \text{ and }E_{z} = F^{*}_{z}F_{z}~.
		\end{equation*}
		In fact, for \(g\in H^{2}(\C^n,d\mu)\), 
		\begin{eqnarray*}
			F^{*}_{z}F_{z} g(\xi) &=& \frac{1}{\pi^{n/2}} \sum_{u \in \Z^{2n}} \langle g, k_{u-z} \rangle F^{*}_{z} e_{u} (\xi)~\\
			&=& \frac{1}{\pi^n} \sum_{u,v \in \Z^{2n}} \langle g, k_{u-z} \rangle \langle e_{u},e_{v} \rangle k_{v-z} (\xi) \\
			&=& \frac{1}{\pi^n} \sum_{u \in \Z^{2n}} \langle g, k_{u-z} \rangle k_{u-z}(\xi) = E_{z}g(\xi)~.
		\end{eqnarray*}
		Moreover, 
		\begin{equation*}
			F_{z}F^{*}_{z} = \frac{1}{\pi^n} \sum_{u,v \in \Z^{2n}} \langle k_{u-z}, k_{v-z} \rangle e_{v} \otimes e_{u}~.
		\end{equation*}
		Since \(  \norm{F^{*}_{z}F_{z}}=\norm{F_{z}F^{*}_{z}}\), to get \( \norm{E_{z}}\) it suffices to estimate the latter. For every vector \(x=\sum_{u \in \Z^{2n}} x_{u}e_{u}\) of \(H^{2}(\C^n,d\mu)\), we have
		\begin{eqnarray}\label{FFx}
			\langle F_{z}F^{*}_{z} x, x \rangle &=& \frac{1}{\pi^{n}} \sum_{u,v \in \Z^{2n}} \langle k_{u-z}, k_{v-z} \rangle \langle x, e_{u} \rangle \langle e_{v},x \rangle \nonumber \\
			&\le &  \frac{1}{\pi^{n}} \sum_{u,v \in \Z^{2n}} |\langle k_{u-z}, k_{v-z} \rangle| |x_{u}| |x_{v}| \nonumber \\
			&=&  \frac{1}{\pi^{n}} \sum_{u,v \in \Z^{2n}}  e^{-\frac{1}{2}|u-v|^{2}}  |x_{u}| |x_{v}|~.
		\end{eqnarray}
		Since \(\sum_{u \in \Z^{2n}}  e^{-\frac{1}{2}|u-v|^{2}} = \sum_{u \in \Z^{2n}} e^{-\frac{1}{2}|u|^2} \) for \(v\in \Z^{2n}\), then the function \(A(u,v) = e^{-\frac{1}{2}|u-v|^{2}}\) satisfies the hypotheses of the discrete Schur test with \(h_{u}=1\) for all \(u \in \Z^{2n}\). Hence, from (\ref{FFx}) we have
		\begin{equation*}
			\langle F_{z}F^{*}_{z} x, x \rangle \le \frac{1}{\pi^n}C \norm{x}^2~.
		\end{equation*}
		Since \( F_{z}F^{*}_{z}\) is self-adjoint, it follows that \(\norm{E_{z}} =\norm{ F_{z}F^{*}_{z}} \le C\).
	\end{proof}

	\begin{lem}\label{LemUh}
		There is a constant \(0<C<\infty\) such that the following estimate holds: Let \(h_{u} \in \mathcal{H}_{*}, u \in \Z^{2n}\), be functions satisfying the condition \(\sup_{u\in \Z^{2n}}\norm{h_{u}}_{*}<\infty\). Then
		\begin{equation*}
			\norm{\sum_{u \in \Z^{2n}}(U_{u}h_{u})\otimes e_{u}} \le C \sup_{u\in \Z^{2n}} \norm{h_{u}}_{*}~.
		\end{equation*}
	\end{lem}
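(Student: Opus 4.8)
The plan is to bound the operator norm by the discrete Schur test (Lemma \ref{SchurTest}) on the lattice \(\Z^{2n}\), after reducing everything to a pointwise estimate on the inner products \(\langle U_{u}h_{u}, U_{v}h_{v}\rangle\). Write \(T=\sum_{u\in\Z^{2n}}(U_{u}h_{u})\otimes e_{u}\), so that by (\ref{DefOtimes}) one has \(Tx=\sum_{u}\langle x,e_{u}\rangle U_{u}h_{u}\) for \(x\in H^{2}(\C^n,d\mu)\). Since \(\{e_{u}\}\) is orthonormal, it suffices to prove the bound \(\norm{T_{F}}\le C\sup_{u}\norm{h_{u}}_{*}\) uniformly over all finite \(F\subset\Z^{2n}\), where \(T_{F}=\sum_{u\in F}(U_{u}h_{u})\otimes e_{u}\); the estimate below then shows that the partial sums converge strongly, so that \(T\) is well defined with the same bound. (First one checks that \(U_{u}h_{u}\in H^{2}(\C^n,d\mu)\): the change of variables \(\eta=u-\xi\) gives \(\norm{U_{u}h_{u}}^{2}=\pi^{-n}\int_{\C^n}|h_{u}(\eta)|^{2}e^{-|\eta|^{2}}\,dV(\eta)\le\pi^{-n}\norm{h_{u}}_{*}^{2}<\infty\).) For finite \(F\) we expand \(\norm{T_{F}x}^{2}=\sum_{u,v\in F}\langle x,e_{u}\rangle\overline{\langle x,e_{v}\rangle}\,\langle U_{u}h_{u},U_{v}h_{v}\rangle\).

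The heart of the argument is the estimate
\begin{equation*}
	|\langle U_{u}h_{u},U_{v}h_{v}\rangle|\le\frac{1}{\pi^{n}}\,e^{-\frac{1}{8}|u-v|^{2}}\,\norm{h_{u}}_{*}\norm{h_{v}}_{*},\qquad u,v\in\Z^{2n}.
\end{equation*}
To prove it, write the inner product with respect to \(d\mu\), use the identity \(|k_{u}(\xi)k_{v}(\xi)|e^{-|\xi|^{2}}=e^{-\frac{1}{2}(|u-\xi|^{2}+|v-\xi|^{2})}\) established in the proof of Example \ref{Ex1}, and substitute \(\eta=u-\xi\) to get
\begin{equation*}
	|\langle U_{u}h_{u},U_{v}h_{v}\rangle|\le\frac{1}{\pi^{n}}\int_{\C^n}|h_{u}(\eta)|\,|h_{v}(\eta+v-u)|\,e^{-\frac{1}{2}(|\eta|^{2}+|\eta+v-u|^{2})}\,dV(\eta).
\end{equation*}
The triangle inequality in the form \(|a|^{2}+|b|^{2}\ge\frac{1}{2}|a-b|^{2}\) (with \(a=\eta\), \(b=\eta+v-u\)) yields \(|\eta|^{2}+|\eta+v-u|^{2}\ge\frac{1}{2}|u-v|^{2}\); splitting the Gaussian as \(e^{-\frac{1}{2}(\,\cdot\,)}=e^{-\frac{1}{4}(\,\cdot\,)}e^{-\frac{1}{4}(\,\cdot\,)}\) pulls out the factor \(e^{-\frac{1}{8}|u-v|^{2}}\), and Cauchy--Schwarz on the remaining integral, followed by the translation \(\zeta=\eta+v-u\) in the second factor, produces exactly \(\norm{h_{u}}_{*}\norm{h_{v}}_{*}\).

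Finally, set \(M=\sup_{u}\norm{h_{u}}_{*}\) and \(x_{u}=\langle x,e_{u}\rangle\). Plugging the pointwise estimate into the expansion of \(\norm{T_{F}x}^{2}\) and enlarging the index set gives \(\norm{T_{F}x}^{2}\le\frac{M^{2}}{\pi^{n}}\sum_{u,v\in\Z^{2n}}e^{-\frac{1}{8}|u-v|^{2}}|x_{u}||x_{v}|\). Apply Lemma \ref{SchurTest} to the kernel \(K(u,v)=e^{-\frac{1}{8}|u-v|^{2}}\) with weights \(h_{u}\equiv 1\): since \(\sum_{v\in\Z^{2n}}e^{-\frac{1}{8}|u-v|^{2}}=\sum_{v\in\Z^{2n}}e^{-\frac{1}{8}|v|^{2}}=:\Sigma<\infty\) independently of \(u\) (and likewise for the transposed sum), we get \(\sum_{u,v}K(u,v)|x_{u}||x_{v}|\le\Sigma\norm{x}^{2}\), hence \(\norm{T_{F}x}^{2}\le(\Sigma/\pi^{n})M^{2}\norm{x}^{2}\) and \(\norm{T}\le(\Sigma/\pi^{n})^{1/2}\sup_{u}\norm{h_{u}}_{*}\), which is the claim with \(C=(\Sigma/\pi^{n})^{1/2}\).

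I expect the main obstacle to be the pointwise estimate on \(\langle U_{u}h_{u},U_{v}h_{v}\rangle\): a direct application of Cauchy--Schwarz only gives \(\pi^{-n}\norm{h_{u}}_{*}\norm{h_{v}}_{*}\) with no decay in \(|u-v|\), which is useless in the Schur test since the resulting double sum diverges. One genuinely has to exploit the product of the two Gaussian factors, extracting \(e^{-\frac18|u-v|^2}\) via \(|a|^{2}+|b|^{2}\ge\frac{1}{2}|a-b|^{2}\) while keeping a quarter of each exponent in reserve for the Cauchy--Schwarz step. Once that decay is in hand, the rest is the routine Schur-test bookkeeping already used in the earlier lemmas of this section.
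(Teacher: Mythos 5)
Your proposal is correct and follows essentially the same route as the paper: the key estimate \(|\langle U_{u}h_{u},U_{v}h_{v}\rangle|\le\pi^{-n}e^{-\frac{1}{8}|u-v|^{2}}\norm{h_{u}}_{*}\norm{h_{v}}_{*}\), obtained by splitting the Gaussian factor \(e^{-\frac12(|u-\xi|^2+|v-\xi|^2)}\) and applying Cauchy--Schwarz, followed by the discrete Schur test with unit weights, is exactly the paper's argument. Your extra care with finite partial sums and the membership \(U_{u}h_{u}\in H^{2}(\C^n,d\mu)\) is a harmless refinement, not a different method.
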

	
	\begin{proof}
		We start by estimating \(|\langle U_{u}h_{u},U_{v}h_{v}\rangle|\). For \(u,v \in \Z^{2n} \), we have
		\begin{eqnarray}\label{UuUv1}
			\langle U_{u} h_{u} , U_{v} h_{v} \rangle & = & \frac{1}{\pi^n} \int_{\C^{n}} U_{u} h_{u} (\xi) ~\overline{U_{v} h_{v}}(\xi)~ e^{-|\xi|^{2}} dV(\xi) \nonumber \\
			&=& \frac{1}{\pi^n} \int_{\C^{n}} h_{u}(u-\xi) ~\overline{h_{v}(v-\xi)}~ k_{u}(\xi) \overline{k_{v}(\xi)} ~ e^{-|\xi|^{2}} dV(\xi)~.
		\end{eqnarray}
		From relation (\ref{kzw}), we have
		\begin{equation*}
			|k_{u}(\xi) \overline{k_{v}(\xi)}|  e^{-|\xi|^{2}} = e^{-\frac{1}{2}(|u-\xi|^2 + |v-\xi|^2)} \le e^{-\frac{1}{8}|u-v|^2} e^{-\frac{1}{4}|u-\xi|^2} e^{-\frac{1}{4}|v-\xi|^2}~.
		\end{equation*}
		Combining this with relation (\ref{UuUv1}) and applying \(\mathrm{H\ddot{o}lder}\) inequality, we obtain
		\begin{equation}\label{UuUv2}
			|\langle U_{u} h_{u} , U_{v} h_{v} \rangle| \le \frac{1}{\pi^n} e^{-\frac{1}{8}|u-v|^2} \norm{h_{u}}_{*} \norm{h_{v}}_{*} \le  \frac{1}{\pi^n} e^{-\frac{1}{8}|u-v|^2} H^{2}_{*}~,
		\end{equation}
		where \(H_{*}=\sup_{u\in \Z^{2n}}\norm{h_{u}}_{*}\).
		
		We consider the operator \(A\) defined by
		\begin{equation*}
			A = \sum_{u \in \Z^{2n}} (U_{u} h_{u}) \otimes e_{u}~.
		\end{equation*}
		For any vector \(x=\sum_{u \in \Z^{2n}} x_{u} e_{u} \in H^{2}(\C^n, d\mu)\), using (\ref{UuUv2}) we have
		\begin{eqnarray*}
			\norm{Ax}^{2} &= & \langle Ax, Ax \rangle = \sum_{u,v \in \Z^{2n}} \langle (U_{u}h_{u})\otimes e_{u} x, (U_{v}h_{v})\otimes e_{v}x \rangle \\
			&=& \sum_{u,v \in \Z^{2n}} \langle x, e_{u} \rangle \langle e_{v}, x \rangle \langle U_{u} h_{u} , U_{v} h_{v} \rangle = \sum_{u,v \in \Z^{2n}}  \overline{x_{u}} x_{v} \langle U_{u} h_{u} , U_{v} h_{v} \rangle \\
			&\le &  \sum_{u,v \in \Z^{2n}}  |\overline{x_{u}}| |x_{v}| |\langle U_{u} h_{u} , U_{v} h_{v} \rangle|  \le \frac{1}{\pi^n} H^{2}_{*} \sum_{u,v \in \Z^{2n}} e^{-\frac{1}{8}|u-v|^2} |x_{u}| |x_{v}|~.
		\end{eqnarray*}
		The discrete Schur test (see Lemma \ref{SchurTest}) applied to the right-hand side (with \(h_{u}=1 \,\forall u \in \Z^{2n}\)) of the later inequality, leads to
		\begin{equation*}
			\norm{Ax}^{2} \le C H^{2}_{*} \sum_{u \in \Z^{2n}} |x_{u}|^{2} = C H^{2}_{*} \norm{x}^{2},
		\end{equation*}
		where \(C=\sum_{u\in \Z^{2n}} e^{-\frac{1}{8}|u|^{2} }\) is finite. Since the vector \(x\) is arbitrary, we conclude that \mbox{\(\norm{A}\le C^{\frac{1}{2}} H_{*}\)}.
	\end{proof}
	
	\begin{pro}\label{proYz}
		Suppose \(\{c_{u}: u \in \Z^{2n}\}\) are complex numbers satisfying the condition \mbox{\(\sup_{u\in \Z^{2n}}|c_{u}| < \infty\)}. Then for each \(z\in \C^{n}\), the operator
		\begin{equation}\label{Yz}
			Y_{z} = \sum_{u \in \Z^{2n}} c_{u} k_{u-z} \otimes k_{u-z}
		\end{equation}
		is bounded on \(H^{2}(\C^n, d\mu)\). Moreover, the map \(z \mapsto Y_{z}\) from \(\C^{n}\) to \(\mathcal{B}(H^{2}(\C^{n},d\mu))\) is continuous with respect to the operator norm.
	\end{pro}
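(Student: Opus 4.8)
The plan is to realise $Y_z$ as a product $Y_z = A_z B_z^{*}$ of two operators each having the shape treated in Lemma \ref{LemUh}, and then to read off both boundedness and norm-continuity from that lemma together with \eqref{Uu1}, \eqref{Uu2} and \eqref{Limkzkw}. (An alternative, more direct route is available: since $\langle Y_z x,y\rangle=\sum_u c_u\langle x,k_{u-z}\rangle\langle k_{u-z},y\rangle$, a Cauchy--Schwarz step reduces boundedness to the frame-type bound $\sum_u|\langle x,k_{u-z}\rangle|^2\le\pi^n\norm{E_z}\norm{x}^2$ coming from $E_z=F_z^{*}F_z$; but the factorisation approach handles continuity uniformly, so I would use it.)

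\textbf{The factorisation.} Write each coefficient in polar form, $c_u=|c_u|^{1/2}\cdot\overline{|c_u|^{1/2}e^{-i\theta_u}}$, and set $a_u=|c_u|^{1/2}e^{i\theta_u}$, $b_u=|c_u|^{1/2}$, so that $a_u\overline{b_u}=c_u$ and $\sup_u|a_u|=\sup_u|b_u|=(\sup_u|c_u|)^{1/2}<\infty$. For fixed $z$, define
\[
A_z=\sum_{u\in\Z^{2n}}\bigl(U_u(a_u k_z)\bigr)\otimes e_u,\qquad B_z=\sum_{u\in\Z^{2n}}\bigl(U_u(b_u k_z)\bigr)\otimes e_u .
\]
Since $k_z\in\mathcal{H}_{*}$ (the integral defining $\norm{k_z}_{*}$ is a convergent Gaussian integral) and $\sup_u\norm{a_u k_z}_{*}=(\sup_u|c_u|)^{1/2}\norm{k_z}_{*}<\infty$, Lemma \ref{LemUh} shows $A_z$ and $B_z$ are bounded with $\norm{A_z},\norm{B_z}\le C(\sup_u|c_u|)^{1/2}\norm{k_z}_{*}$. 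Next I would verify the operator identity $Y_z=A_zB_z^{*}$: using $B_z^{*}=\sum_u e_u\otimes(U_u(b_u k_z))$, the composition rule $(f\otimes g)(h\otimes k)=\langle h,g\rangle\, f\otimes k$, and the orthonormality of $\{e_u\}$, one gets $A_zB_z^{*}=\sum_u a_u\overline{b_u}\,(U_uk_z)\otimes(U_uk_z)$; then \eqref{Uu1}--\eqref{Uu2} give $(U_uk_z)\otimes(U_uk_z)=k_{u-z}\otimes k_{u-z}$ (the unimodular phase in \eqref{Uu1} cancels), so $A_zB_z^{*}=\sum_u c_u\,k_{u-z}\otimes k_{u-z}=Y_z$. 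In particular $Y_z$ is bounded, with $\norm{Y_z}\le\norm{A_z}\norm{B_z}\le C^{2}(\sup_u|c_u|)\norm{k_z}_{*}^{2}$.

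\textbf{Norm-continuity.} For $z,z'\in\C^n$ I would telescope
\[
Y_z-Y_{z'}=A_zB_z^{*}-A_{z'}B_{z'}^{*}=(A_z-A_{z'})B_z^{*}+A_{z'}(B_z-B_{z'})^{*},
\]
hence $\norm{Y_z-Y_{z'}}\le\norm{A_z-A_{z'}}\norm{B_z}+\norm{A_{z'}}\norm{B_z-B_{z'}}$. By linearity of $U_u$ and of $\otimes$ in its first slot, $A_z-A_{z'}=\sum_u\bigl(U_u(a_u(k_z-k_{z'}))\bigr)\otimes e_u$, and since $k_z-k_{z'}\in\mathcal{H}_{*}$ with $\sup_u\norm{a_u(k_z-k_{z'})}_{*}=(\sup_u|c_u|)^{1/2}\norm{k_z-k_{z'}}_{*}$, Lemma \ref{LemUh} yields $\norm{A_z-A_{z'}}\le C(\sup_u|c_u|)^{1/2}\norm{k_z-k_{z'}}_{*}$, and likewise for $\norm{B_z-B_{z'}}$. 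Because $\norm{k_z-k_{z'}}_{*}\to 0$ as $z'\to z$ by \eqref{Limkzkw} — which also keeps $\norm{k_{z'}}_{*}$, and hence $\norm{A_{z'}},\norm{B_z}$, bounded for $z'$ near $z$ — we get $\norm{Y_z-Y_{z'}}\to 0$, i.e.\ $z\mapsto Y_z$ is norm-continuous.

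\textbf{Main obstacle.} The only genuinely delicate point is not a single estimate but making the manipulations with the infinite sums rigorous: that the series defining $A_z$ and $B_z$ converge (in the strong operator topology) to bounded operators — which is exactly what Lemma \ref{LemUh} provides — and that the rearrangement $A_zB_z^{*}=\sum_u a_u\overline{b_u}(U_uk_z)\otimes(U_uk_z)$, together with the identification of this with the formal sum $Y_z$ (say, by testing against the dense span $\mathcal{S}$ of reproducing kernels), is legitimate. The rapid Gaussian decay $|\langle U_uh_u,U_vh_v\rangle|\lesssim e^{-|u-v|^2/8}$ recorded in \eqref{UuUv2} is what makes these interchanges of summation and adjoint harmless; everything else is bookkeeping with the tensor-product identities of Proposition \ref{PropPre}.
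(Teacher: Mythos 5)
Your proof is correct and follows essentially the same route as the paper: factor $Y_z=A_zB_z^{*}$ with $A_z,B_z$ of the form $\sum_u (U_u h_u)\otimes e_u$, apply Lemma \ref{LemUh} for boundedness, and use \eqref{Limkzkw} with the same lemma for norm-continuity. The only (harmless) difference is that you split the coefficients symmetrically as $c_u=a_u\overline{b_u}$, whereas the paper puts all of $c_u$ into $A_z$ and takes $B_z$ with unit coefficients.
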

	\begin{proof}
		Let \(\{c_{u} : u \in \Z^{2n}\}\) be complex numbers such that \mbox{\(\sup_{u\in \Z^{2n}}|c_{u}| < \infty\)}. For \(z\in \C^{n}\), we define 
		\begin{equation*}
			A_{z} = \sum_{u \in \Z^{2n}} c_{u} (U_{u}k_{z}) \otimes e_{u} \quad \text{  and  } \quad B_{z} = \sum_{u \in \Z^{2n}}(U_{u}k_{z})\otimes e_{u}~.
		\end{equation*}
		Using the relation (\ref{Uu2}), we have \(Y_{z} = A_{z}B^{*}_{z}\). Set \(h_{u} = c_{u}k_{z}\), then using Definition \ref{normstar}, we have
		\begin{equation*}
			\norm{h_{u}}_{*} = |c_{u}|\norm{k_{z}}_{*} = (2\pi)^{n/2} e^{\frac{|z|^2}{2}} |c_{u}|~.
		\end{equation*}
		Therefore, \(\sup_{u\in \Z^{2n}} \norm{h_{u}}_{*} < \infty\). Applying Lemma \ref{LemUh} to \(h_{u} = c_{u}k_{z}\), we see that \(\norm{A_{z}} \le C \sup_{u\in \Z^{2n}}|c_{u}|\).  Thus each \(A_{z}\) is bounded. Since \(B_z\) is just a special case of \(A_{z}\) (with \(c_{u}=1\) for all \(u \in \Z^{2n}\)), then it is also bounded and hence \(Y_{z} = A_{z}B^{*}_{z}\) is bounded. 
		
		To show that the map \(z\mapsto Y_{z}\) is continuous with respect to the operator norm, it suffices to show that the maps \(z\mapsto A_{z}\) and \(z\mapsto B_{z}\) are continuous.
		
		For any \(z,w \in \C^n\), we have
		\begin{equation*}
			A_{z} - A_{w} = \sum_{u \in \Z^{2n}} c_{u}\left\{U_{u}(k_{z}-k_{w})\right\}\otimes e_{u}~.
		\end{equation*}
		Applying Lemma \ref{LemUh} in the case \(h_{u}=c_{u}(k_{z}-k_{w})\) and using (\ref{Limkzkw}) , it holds
		\begin{equation*}
			\norm{A_{z}-A_{w}} \le C \left(\sup_{u\in \Z^{2n}} |c_{u}|\right) \norm{k_{z}-k_{w}}_{*} \underset{w\to z}{\rightarrow}0~.
		\end{equation*}
		Hence the map \(z \mapsto A_{z}\) is continuous with respect to the operator norm. Similarly, we show that the map \(z\mapsto B_{z}\) is also continuous. This completes the proof.
	\end{proof}
	
	\begin{pro}\label{BT1}
		For all \(B\in \mathcal{WL}\) and \(z,w\in S\), we have \(E_{w}BE_{z} \in \mathcal{T}^{(1)}\).
	\end{pro}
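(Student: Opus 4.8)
The starting point is formula \eqref{EwBEz}, which expresses $E_{w}BE_{z}$ as the double series $\pi^{-2n}\sum_{u,v\in\Z^{2n}}\langle Bk_{u-z},k_{v-w}\rangle\,k_{v-w}\otimes k_{u-z}$; the plan is to approximate this series in operator norm by Toeplitz operators, in four steps. First I would truncate the series to the band $|u-v|\le N$. Then I would split each band sum, according to the offset $j=v-u$, into finitely many ``translated-diagonal'' operators. Next I would treat the genuine diagonal part by Toeplitz operators with sharply localized symbols. Finally I would reduce the translated parts to diagonal ones via the translation (Weyl) unitaries; this last step is where I expect the real difficulty to lie.

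For the truncation I would use that $\norm{E_{z}}\le C$, so $\sum_{u}|\langle x,k_{u-z}\rangle|^{2}=\pi^{n}\langle E_{z}x,x\rangle\le\pi^{n}C\norm{x}^{2}$; i.e. $\{\pi^{-n/2}k_{u-z}\}_{u}$ is a Bessel sequence, and the same for $\{k_{v-w}\}_{v}$. Applying the discrete Schur test (Lemma \ref{SchurTest}, $h\equiv1$) to the ``matrix'' $(|c_{uv}|)$ then gives, for every array $(c_{uv})$, the estimate $\bigl\Vert\sum_{u,v}c_{uv}\,k_{v-w}\otimes k_{u-z}\bigr\Vert\le\pi^{n}C\max\bigl(\sup_{u}\sum_{v}|c_{uv}|,\ \sup_{v}\sum_{u}|c_{uv}|\bigr)$. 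I would use this with $c_{uv}=\langle Bk_{u-z},k_{v-w}\rangle$ restricted to $|u-v|>N$; comparing the discrete sums with the integrals in the definition of $\mathcal{WL}$ (via the usual pointwise-versus-average estimates for Fock-type entire functions), the third and fourth weak-localization conditions force this maximum to $0$ as $N\to\infty$. Hence $E_{w}BE_{z}$ is the operator-norm limit of the finite band sums $\pi^{-2n}\sum_{|u-v|\le N}\langle Bk_{u-z},k_{v-w}\rangle\,k_{v-w}\otimes k_{u-z}$, and since $\mathcal{T}^{(1)}$ is norm closed it suffices to show each band sum belongs to $\mathcal{T}^{(1)}$.

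Grouping a band sum by $j=v-u$ writes it as $\sum_{|j|\le N}\Theta_{j}$, where $\Theta_{j}=\sum_{u}\langle Bk_{u-z},k_{u+j-w}\rangle\,k_{u+j-w}\otimes k_{u-z}$ is bounded (by Proposition \ref{proYz}/Lemma \ref{LemUh}, since $\norm{k_{a}}=1$ and $B$ is bounded); so I need $\Theta_{j}\in\mathcal{T}^{(1)}$. Put $\tau=j-w+z$. If $\tau=0$ (which forces $z=w$, $j=0$), then $\Theta_{0}=\sum_{u}c_{u}\,k_{u-z}\otimes k_{u-z}$ is diagonal, and I would approximate it by $T_{g_{\delta}}$ with $g_{\delta}=V(B(0,\delta))^{-1}\sum_{u}c_{u}\mathbf{1}_{B(u-z,\delta)}$, a bona fide $L^{\infty}$ function once $\delta<1/2$ (the balls are pairwise disjoint). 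By \eqref{Toef1}, $\Theta_{0}-T_{g_{\delta}}=V(B(0,\delta))^{-1}\sum_{u}c_{u}\int_{B(u-z,\delta)}\bigl(k_{u-z}\otimes k_{u-z}-k_{\zeta}\otimes k_{\zeta}\bigr)\,dV(\zeta)$, and, since the kernels are normalized, $\norm{k_{a}\otimes k_{a}-k_{b}\otimes k_{b}}=(1-|\langle k_{a},k_{b}\rangle|^{2})^{1/2}=(1-e^{-|a-b|^{2}})^{1/2}$; feeding this into the Schur estimate above yields $\norm{\Theta_{0}-T_{g_{\delta}}}\le C\,(\sup_{u}|c_{u}|)\,(1-e^{-\delta^{2}})^{1/2}\to0$ as $\delta\to0$, so $\Theta_{0}\in\mathcal{T}^{(1)}$.

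For $\tau\ne0$ I would bring in the Weyl unitary $W_{\tau}f(\xi)=f(\xi-\tau)\,k_{\tau}(\xi)$; a computation parallel to \eqref{Uu1}--\eqref{Uu2} shows $W_{\tau}k_{a}=\lambda_{\tau,a}\,k_{a+\tau}$ with $|\lambda_{\tau,a}|=1$, hence $k_{a+\tau}\otimes k_{a}=\overline{\lambda_{\tau,a}}\,W_{\tau}(k_{a}\otimes k_{a})$ and therefore $\Theta_{j}=W_{\tau}D_{j}$, where $D_{j}=\sum_{u}\langle Bk_{u-z},k_{u+j-w}\rangle\,\overline{\lambda_{\tau,u-z}}\,k_{u-z}\otimes k_{u-z}$ is a diagonal operator, so $D_{j}\in\mathcal{T}^{(1)}$ by the previous step. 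Everything therefore reduces to showing that $W_{\tau}A\in\mathcal{T}^{(1)}$ whenever $A\in\mathcal{T}^{(1)}$; writing $A$ as a norm-limit of Toeplitz operators and using \eqref{Toef1}, this in turn comes down to approximating the single translated block $W_{\tau}(k_{a}\otimes k_{a})$ (which equals $k_{a+\tau}\otimes k_{a}$ up to a unimodular factor), \emph{uniformly in} $a$, by Toeplitz operators $T_{h}$ with $\norm{h}_{\infty}$ controlled and $h$ supported in a fixed-radius neighbourhood of $\{a,\,a+\tau\}$, and then superposing via an almost-orthogonality (continuous Schur) estimate. I expect this to be the main obstacle: a symbol concentrated near a single point only produces the diagonal rank-one $k_{a}\otimes k_{a}$, so one is forced to use a genuinely oscillatory symbol, spread over a region of diameter comparable to $|\tau|$ with its phase tuned to the frequency shift $\tau$, and to control the approximation uniformly in the lattice point $a$ --- uniformity being possible precisely because $\norm{k_{a}\otimes k_{a}-k_{b}\otimes k_{b}}$ depends only on $|a-b|$. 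Granting this, $\Theta_{j}=W_{\tau}D_{j}\in\mathcal{T}^{(1)}$; summing over $|j|\le N$ and letting $N\to\infty$ gives $E_{w}BE_{z}\in\mathcal{T}^{(1)}$.
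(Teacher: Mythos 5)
Your first three steps are sound and essentially parallel the paper: the band truncation with the Schur test and the weak-localization tails is the paper's decomposition $E_wBE_z=V_R+W_R$ together with Lemma \ref{Lim=0}, the regrouping into finitely many translated-diagonal pieces plays the role of the paper's reduction to the class $\mathcal{D}_0$ (Proposition \ref{EwBEzD}), and your treatment of the genuine diagonal by symbols that are sums of indicators of disjoint balls is, up to presentation, the paper's Proposition \ref{YzT1} (the paper gets an exact identity $A_{\varepsilon}=T_{f_{\varepsilon}}$ by averaging $Y_z$ over a small ball and invoking the norm continuity of $z\mapsto Y_z$, which is cleaner than your projection-difference estimate but the same idea).

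The problem is the fourth step, which you yourself flag and then assume: the membership in $\mathcal{T}^{(1)}$ of the translated diagonals $\sum_u c_u\,k_{u+j-w}\otimes k_{u-z}$ (equivalently, of $W_\tau A$ for $A\in\mathcal{T}^{(1)}$, or of $\sum_u c_u\,k_u\otimes k_{u-\tau}$). This is not a technical remainder; it is the technical heart of the proposition, and nothing in your sketch closes it. Note that $\mathcal{T}^{(1)}$ is defined only as the norm closure of the \emph{set} $\{T_f: f\in L^{\infty}\}$, not a priori an algebra, so ``$W_\tau A\in\mathcal{T}^{(1)}$ whenever $A\in\mathcal{T}^{(1)}$'' is not something you can get by multiplying approximants, and it is essentially as strong as the off-diagonal content of the theorem itself; moreover a single translated block $k_{a+\tau}\otimes k_a$ cannot be norm-approximated by one Toeplitz operator with a localized bounded symbol, so your ``oscillatory symbol plus almost-orthogonal superposition'' plan would have to be carried out globally, and you give no argument that it can be. The paper bridges exactly this gap by a different mechanism: Proposition \ref{PropUuKzT1} proves by induction on $|\alpha|$ that $\sum_u c_u (U_uK_z)\otimes(U_uK_{z;\alpha})\in\mathcal{T}^{(1)}$, using difference quotients $t^{-1}(A_t-A_0)$ and the norm closedness of $\mathcal{T}^{(1)}$ (so that derivatives of $\mathcal{T}^{(1)}$-valued families stay in $\mathcal{T}^{(1)}$), and Proposition \ref{ProSumT1} then sums the exponential series $K_w=\sum_j g_w^j/j!$, with Lemma \ref{LemUh} controlling the operator norms, to obtain $\sum_u c_u\,k_u\otimes k_{u-w}\in\mathcal{T}^{(1)}$; Proposition \ref{DoT1} finishes by a compactness/uniform-continuity partition in the offset variable. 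Without an argument of this kind (or a genuine proof of your Weyl-invariance claim), your proposal does not prove the proposition.
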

	To prove this proposition, we introduce the following definitions that will allow us to split his proof into two independent parts.
	
	
	\begin{defn}\label{DoD}
		\phantom{x}	\begin{enumerate}[label=(\alph*)]
			\item We denote by \(\mathcal{D}_{0}\) the collection of operators of the form
			\begin{equation*}
				\sum_{u \in \Z^{2n}} c_{u}k_{u} \otimes k_{\gamma(u)}~,
			\end{equation*}
			where \(\{c_{u} : u \in \Z^{2n}\}\) is any bounded set of complex coefficients and \(\gamma : \Z^{2n} \rightarrow \C^{n}\) is any map for which there exists \(0<C < \infty\) such that \(\norm{u-\gamma(u)} \le C\) for every \(u \in \Z^{2n}\).
			\item Let \(\mathcal{D}\) denote the operator-norm closure of the linear span of \(\mathcal{D}_{0}\).
		\end{enumerate}
	\end{defn}
	
	\begin{pro}\label{EwBEzD}
		If \(B \in \mathcal{WL}\), then \(E_{w}BE_{z} \in \mathcal{D} \) for all \(z,w \in \C^n\). 
	\end{pro}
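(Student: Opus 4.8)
The plan is to exhibit \(E_{w}BE_{z}\) as an operator-norm limit of finite sums of operators in \(\mathcal{D}_{0}\), starting from the explicit series \eqref{EwBEz}. Since \eqref{Ez1} shows \(E_{z}\) depends on \(z\) only modulo \(\Z^{2n}\), we may assume \(z,w\in S\). Writing \(d=v-u\in\Z^{2n}\) and regrouping \eqref{EwBEz} gives \(E_{w}BE_{z}=\sum_{d\in\Z^{2n}}P_{d}\), where
\begin{equation*}
	P_{d}=\frac{1}{\pi^{2n}}\sum_{u\in\Z^{2n}}\langle Bk_{u-z},k_{u+d-w}\rangle\,k_{u+d-w}\otimes k_{u-z}.
\end{equation*}
After the substitution \(m=u+d\) this reads \(\pi^{-2n}\sum_{m}\langle Bk_{m-d-z},k_{m-w}\rangle\,k_{m-w}\otimes k_{m-d-z}\): the coefficients are uniformly bounded (by \(\pi^{-2n}\norm{B}\)), and in each term the two kernels sit at \(m-w\) and \(m-d-z\), a pair of points at distance \(|d+z-w|\le|d|+2\sqrt{2n}\). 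Hence each \(P_{d}\) is, after relabelling, of the form appearing in Definition \ref{DoD}(a) (with first-slot points running along the translate \(\Z^{2n}-w\) of the lattice, which is immaterial), so each partial sum \(R_{r}:=\sum_{|d|<r}P_{d}\) lies in the linear span of \(\mathcal{D}_{0}\), hence in \(\mathcal{D}\). It remains to prove \(\norm{E_{w}BE_{z}-R_{r}}\to0\) as \(r\to\infty\).

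For this I would factor through the operator \(F_{z}=\pi^{-n/2}\sum_{u\in\Z^{2n}}e_{u}\otimes k_{u-z}\) used above, for which \(E_{z}=F_{z}^{*}F_{z}\) and \(\norm{F_{z}}^{2}=\norm{E_{z}}\le C\) for \(z\in S\). A short computation gives \(E_{w}BE_{z}=F_{w}^{*}MF_{z}\), where \(M=F_{w}BF_{z}^{*}\) is the bounded operator whose matrix in \(\{e_{u}\}\) is \(\langle Me_{u},e_{v}\rangle=\pi^{-n}\langle Bk_{u-z},k_{v-w}\rangle\), and likewise \(R_{r}=F_{w}^{*}M_{r}F_{z}\) with \(M_{r}\) the truncation of \(M\) to the band \(|u-v|<r\). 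Therefore \(\norm{E_{w}BE_{z}-R_{r}}\le C\,\norm{M-M_{r}}\), and applying the discrete Schur test (Lemma \ref{SchurTest}, with all \(h_{u}=1\)) to the entrywise modulus of the matrix of \(M-M_{r}\) yields \(\norm{M-M_{r}}\le(C_{1}(r)C_{2}(r))^{1/2}\), where
\begin{equation*}
	C_{1}(r)=\frac{1}{\pi^{n}}\sup_{u\in\Z^{2n}}\sum_{v:\,|u-v|\ge r}|\langle Bk_{u-z},k_{v-w}\rangle|,\qquad C_{2}(r)=\frac{1}{\pi^{n}}\sup_{v\in\Z^{2n}}\sum_{u:\,|u-v|\ge r}|\langle B^{*}k_{v-w},k_{u-z}\rangle|,
\end{equation*}
the second form of \(C_{2}\) coming from the duality relation \eqref{R1}. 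Thus the proof reduces to \(C_{1}(r)\to0\) and \(C_{2}(r)\to0\).

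This is the crux, and the tool is the reproduction identity \(I=\pi^{-n}\int_{\C^{n}}k_{\zeta}\otimes k_{\zeta}\,dV(\zeta)\) of Proposition \ref{PropPre}\ref{itm:third}. Pairing it against \(Bk_{a}\) and \(k_{c}\), together with \(|\langle k_{\zeta},k_{c}\rangle|=e^{-|\zeta-c|^{2}/2}\), gives for all \(a,c\in\C^{n}\) the pointwise domination
\begin{equation*}
	|\langle Bk_{a},k_{c}\rangle|\le\frac{1}{\pi^{n}}\int_{\C^{n}}e^{-\frac12|\zeta-c|^{2}}\,|\langle Bk_{a},k_{\zeta}\rangle|\,dV(\zeta).
\end{equation*}
Summing over \(v\) with \(|u-v|\ge r\) and using Tonelli bounds \(C_{1}(r)\) by \(\pi^{-2n}\sup_{u}\int\bigl(\sum_{v:\,|u-v|\ge r}e^{-|\zeta-(v-w)|^{2}/2}\bigr)|\langle Bk_{u-z},k_{\zeta}\rangle|\,dV(\zeta)\). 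I would split this at \(|\zeta-(u-z)|=r/2\): on \(\{|\zeta-(u-z)|\ge r/2\}\) the bracketed lattice sum is \(\le C_{0}:=\sup_{\eta}\sum_{v\in\Z^{2n}}e^{-|\eta-v|^{2}/2}<\infty\), and the decay condition of weak localization for \(B\) drives the remaining integral to \(0\); on \(\{|\zeta-(u-z)|<r/2\}\), since \(|u-v|\ge r\) and \(z,w\in S\), every admissible \(v\) satisfies \(|\zeta-(v-w)|\ge r/4\) once \(r\) is large, so the bracketed sum is \(\le e^{-r^{2}/64}\sum_{v}e^{-|\zeta-(v-w)|^{2}/4}\le C_{0}'e^{-r^{2}/64}\) with \(C_{0}'=\sup_{\eta}\sum_{v}e^{-|\eta-v|^{2}/4}<\infty\), and this is integrated against the uniformly finite \(\sup_{a}\int|\langle Bk_{a},k_{\zeta}\rangle|\,dV(\zeta)\). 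Hence \(C_{1}(r)\to0\); the identical argument with \(B\) replaced by \(B^{*}\) gives \(C_{2}(r)\to0\). Combining everything, \(\norm{E_{w}BE_{z}-R_{r}}\le C\,(C_{1}(r)C_{2}(r))^{1/2}\to0\), and since \(\mathcal{D}\) is norm-closed, \(E_{w}BE_{z}\in\mathcal{D}\).

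I expect the discretization in the last paragraph to be the main obstacle: the defining conditions of \(\mathcal{WL}\) are integral estimates over \(\C^{n}\), whereas the matrix of \(E_{w}BE_{z}\) lives on \(\Z^{2n}\), and passing between them with constants uniform in the base points is exactly what the Gaussian-reproduction inequality accomplishes. The remaining ingredients — the regrouping by \(d\), the identification of \(P_{d}\) with a member of \(\mathcal{D}_{0}\), the factorization through \(F_{z},F_{w}\), and the Schur test — follow the pattern of the preliminary lemmas.
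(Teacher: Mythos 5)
Your proof is correct, and while it follows the same skeleton as the paper's (split \(E_wBE_z\), via (\ref{EwBEz}), into a banded part lying in the span of \(\mathcal{D}_0\) plus a tail controlled by the Schur test and the off-diagonal decay of the lattice sums \(\sup_u\sum_{|u-v|\ge r}|\langle Bk_{u-z},k_{v-w}\rangle|\)), both key sub-steps are executed differently. For the decay (the paper's Lemma \ref{Lim=0}) the paper invokes the pointwise sub-mean-value estimate for entire functions (Lemma 2.32 of Zhu's book) on the disjoint balls \(B(v-w,\delta)\) to convert the lattice sum into an integral and then applies the third weak-localization condition; you instead derive the Gaussian domination \(|\langle Bk_a,k_c\rangle|\le\pi^{-n}\int e^{-|\zeta-c|^2/2}|\langle Bk_a,k_\zeta\rangle|\,dV(\zeta)\) from the reproducing identity of Proposition \ref{PropPre}\ref{itm:third} and split the integral at radius \(r/2\), using both the uniform integral bound and the decay condition together with standard Gaussian lattice sums. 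Your route is self-contained (no external lemma), at the mild cost of a slightly longer estimate; the paper's yields Lemma \ref{Lim=0} as a clean stand-alone statement. For the banded part, your regrouping along diagonals \(d=v-u\), giving one \(\mathcal{D}_0\)-operator \(P_d\) per diagonal, is simpler than the paper's bookkeeping with the sets \(\Gamma_j\), the counts \(\mathrm{Card}(F_v)\) and the selection maps \(\gamma^{\nu}_j\); both arguments take the same small liberty with Definition \ref{DoD}(a), namely letting the first-slot kernels run over the translated lattice \(\Z^{2n}-w\) rather than \(\Z^{2n}\) itself. Finally, packaging the tail estimate through the factorization \(E_wBE_z=F_w^*MF_z\) with \(\norm{F_z},\norm{F_w}\) uniformly bounded is an equivalent, slightly tidier form of the paper's direct bilinear-form estimate of \(\langle W_Rh,g\rangle\); both reduce to Lemma \ref{SchurTest} with \(h_u\equiv 1\).
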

	
	The following lemma is necessary for the proof of Proposition \ref{EwBEzD}.
	
	\begin{lem}\label{Lim=0}
		Let \(B \in \mathcal{WL} \), then for every \(z,w \in \C^n \), we have
		\begin{equation*}
			\lim_{R\to \infty} \sup_{u\in \Z^{2n}} \sum_{\underset{|u-v|>R}{v\in \Z^{2n} }} |\langle B k_{u-z}, k_{v-w} \rangle|  =0 \quad \text{ and } \quad \lim_{R\to \infty} \sup_{u\in \Z^{2n}} \sum_{\underset{|u-v|>R}{v\in \Z^{2n} }} |\langle k_{u-z}, Bk_{v-w}\rangle| = 0~.
		\end{equation*}
	\end{lem}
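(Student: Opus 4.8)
The plan is to reduce both lattice sums to the integral quantities in the definition of $\mathcal{WL}$, by means of a pointwise estimate that controls the matrix coefficient $|\langle Bk_{z'},k_{w'}\rangle|$ by its integral over a ball of radius $1$ around $w'$.

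\textbf{Step 1: a pointwise-to-integral estimate.} Since $B$ is admissible, $g:=Bk_{z'}$ lies in $H^{2}(\C^n,d\mu)$ for each $z'$, so $g$ is entire and, by the reproducing property, $\langle Bk_{z'},k_{w'}\rangle=e^{-|w'|^2/2}g(w')$ for all $w'$. Fix $a\in\C^n$. The function $\zeta\mapsto g(\zeta)e^{-\langle\zeta,a\rangle}$ is entire, hence $|g(\zeta)e^{-\langle\zeta,a\rangle}|$ is subharmonic on $\C^n\cong\R^{2n}$ and obeys the sub-mean value inequality on $B(a,1)$. Using $-\mathrm{Re}\langle\zeta,a\rangle=\tfrac12(|\zeta-a|^2-|\zeta|^2-|a|^2)$ together with $|\zeta-a|<1$ on that ball, one extracts a constant $C_{0}=C_{0}(n)$ with
\[
|\langle Bk_{z'},k_{a}\rangle|\ \le\ C_{0}\int_{B(a,1)}|\langle Bk_{z'},k_{\zeta}\rangle|\,dV(\zeta)\qquad(z',a\in\C^n).
\]
The identical estimate holds with $B$ replaced by $B^{*}$, since admissibility also gives $B^{*}k_{z'}\in H^{2}(\C^n,d\mu)$.

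\textbf{Step 2: passing to the sum.} Fix $z,w\in\C^n$ and set $R'=R-|z-w|-1$, which is positive once $R$ is large. If $v\in\Z^{2n}$ satisfies $|u-v|>R$ and $\zeta\in B(v-w,1)$, then $|(u-z)-\zeta|\ge|u-v|-|z-w|-1>R'$; moreover the balls $\{B(v-w,1):v\in\Z^{2n}\}$ cover each point of $\C^n$ at most $N=N(n)$ times. Applying the Step 1 estimate with $z'=u-z$ and $a=v-w$, summing over $v$, and using these two facts, I obtain
\[
\sum_{\substack{v\in\Z^{2n}\\ |u-v|>R}}|\langle Bk_{u-z},k_{v-w}\rangle|\ \le\ C_{0}N\int_{|(u-z)-\zeta|>R'}|\langle Bk_{u-z},k_{\zeta}\rangle|\,dV(\zeta)\ \le\ C_{0}N\sup_{\eta\in\C^n}\int_{|\eta-\zeta|>R'}|\langle Bk_{\eta},k_{\zeta}\rangle|\,dV(\zeta).
\]
The right-hand side does not depend on $u$ and tends to $0$ as $R\to\infty$ by the third defining property of weakly localized operators; this gives the first limit.

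\textbf{Step 3: the second sum.} By the duality relation (\ref{R1}) (applied with the indices $v-w$ and $u-z$) and conjugation, $\langle k_{u-z},Bk_{v-w}\rangle=\langle B^{*}k_{u-z},k_{v-w}\rangle$, so the second sum equals $\sum_{|u-v|>R}|\langle B^{*}k_{u-z},k_{v-w}\rangle|$. Since $B^{*}$ again satisfies the four defining conditions (equivalently $B^{*}\in\mathcal{WL}$), Steps 1 and 2 apply verbatim with $B^{*}$ in place of $B$, now invoking the fourth defining property, and yield the second limit. The essential difficulty is Step 1: upgrading a bound on $|\langle Bk_{z'},k_{w'}\rangle|$ at a single point to a bound by its local integral, which is exactly where the Fock-space sub-mean value inequality (through the holomorphic twist $e^{-\langle\zeta,a\rangle}$) is needed; everything after that is elementary lattice geometry combined with the defining integral conditions, carried out uniformly in $u$.
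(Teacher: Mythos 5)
Your argument is correct and follows essentially the same route as the paper: a pointwise-to-local-integral bound for $|\langle Bk_{u-z},k_{v-w}\rangle|$ (which the paper imports from \cite[Lemma 2.32]{zhu2012} and you reprove via the sub-mean value property of $|g(\zeta)e^{-\langle\zeta,a\rangle}|$), followed by summing over the lattice with bounded overlap of the balls and the inclusion of each ball in $\{|u-z-\zeta|>R-|z-w|-1\}$, so that the third and fourth defining conditions of $\mathcal{WL}$ give the two limits. The only cosmetic differences are your radius-$1$ overlapping balls versus the paper's disjoint radius-$\delta<\tfrac12$ balls, and your explicit duality step $\langle k_{u-z},Bk_{v-w}\rangle=\langle B^{*}k_{u-z},k_{v-w}\rangle$ for the second sum, which the paper leaves implicit.
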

	\begin{proof}
		By \cite[Lemma 2.32]{zhu2012}, for any entire function \(f\) on \(\C^n\), we have 
		\begin{equation*}
			\left|f(z)e^{-\frac{\alpha}{2}|z|^2}\right|^p \le C \int_{B(z,\delta)} |f(w)e^{-\frac{\alpha}{2}|w|^2}|^p dV(w) \quad \text{ for } z \in \C^n.
		\end{equation*}
		Hence for \(\alpha=p=1\) and \(\delta\) small such that the balls \(\{B(v-w,\delta): v \in \Z^{2n} \}\) are mutually disjoint, we have
		\begin{equation*}
			|\langle B k_{u-z}, k_{v-w} \rangle| = |Bk_{u-z} (v-w) |e^{-\frac{|v-w|^2}{2}} \le C \int_{B(v-w,\delta)} |Bk_{u-z}(\zeta) | e^{-\frac{|\zeta|^2}{2}} dV(\zeta).
		\end{equation*}
		Indeed, for \(\delta < \frac{1}{2} \), the balls \(\{B(v-w,\delta) : v \in \Z^{2n} \}\) are mutually disjoint. Otherwise, there would exist \(v,v' \in \Z^{2n} \) such that \(v \neq v' \), and a point \(\xi\) such that \(\xi \in B(v-w,\delta) \cap B(v'-w,\delta) \). In other words:
		\begin{equation*}
			|v-w-\xi| < \delta \quad \text{ and } \quad |v'-w - \xi| < \delta~.
		\end{equation*}
		This implies that 
		\begin{equation*}
			|v-v'|=|(v-w-\xi)-(v'-w-\xi)|\le |v-w-\xi| + |v'-w-\xi| < \delta + \delta= 2 \delta < 1~.
		\end{equation*}
		That is \(|v-v'|<1\). This contradicts the well-known fact that \(|v-v'|\ge 1\). This result actually implies that there exists \(N\in \N\) such that each \(\zeta\in \C^n \) belongs to at most \(N\) balls in \(\{B(v-w,\delta): v\in \Z^{2n}\}\). That is \(\sum_{v \in \Z^{2n}} \chi_{B(v-w,\delta)}(\zeta) \le N \) for each \(\zeta \in \C^n \).
		
		For \(\zeta \in B(v-w,\delta) \),  we have \(|v-w-\zeta|< \delta\). Hence, considering any \(R> \delta + |z-w|\), we have
		\begin{equation*}
			|u-z-\zeta| = |u-v + v-w-\zeta + w-z| \ge |u-v|-|v-w-\zeta|-|w-z| > R - \delta -|z-w|~,
		\end{equation*}
		and 
		\begin{eqnarray*}
			\sum_{\underset{|u-v|>R}{v\in \Z^{2n} }} |\langle B k_{u-z}, k_{v-w} \rangle| &\le& C \int_{|u-z-\zeta|> R-\delta-|z-w|} \sum_{v \in \Z^{2n}} \chi_{B(v-w,\delta)}(\zeta) |\langle Bk_{u-z}, k_{\zeta} \rangle| dV(\zeta) \\
			&\le& CN \int_{|u-z-\zeta|> R-\delta-|z-w|}|\langle Bk_{u-z}, k_{\zeta} \rangle| dV(\zeta),
		\end{eqnarray*}
		which tends to \(0\) as \(R\to \infty\) from the third condition of weakly localized operators. 
	\end{proof}

		\begin{proof}[\textbf{Proof of Proposition \ref{EwBEzD} }]
			From (\ref{EwBEz}), we have 
			\begin{equation*}
				E_{w} BE_{z} = \frac{1}{\pi^{2n}} \sum_{u,v \in \Z^{2n}} \langle Bk_{u-z}, k_{v-w} \rangle k_{v-w} \otimes k_{u-z}~.
			\end{equation*}
			Thus for any \(R>0\), we can write \(E_{w}BE_{z} = V_{R} + W_{R}\), where
			\begin{equation*}
				V_{R} = \frac{1}{\pi^{2n}} \sum_{\underset{|u-v|\le R}{u,v \in \Z^{2n}}} \langle Bk_{u-z}, k_{v-w} \rangle k_{v-w} \otimes k_{u-z} \quad \text{ and } 
			\end{equation*}
			\begin{equation*}
				\quad W_{R} = \frac{1}{\pi^{2n}} \sum_{\underset{|u-v|>R}{u,v \in \Z^{2n}}} \langle Bk_{u-z}, k_{v-w} \rangle k_{v-w} \otimes k_{u-z}~.
			\end{equation*}
			To complete the proof, it suffices to prove that:
			\begin{enumerate}[label=(\alph*)]
				\item \label{itm:first1} \(\lim_{R\to \infty} \norm{W_{R}}=0\).
				\item \label{itm:second1} \(V_{R} \in \) span(\(\mathcal{D}_{0}\)) for every \(R>0\).
			\end{enumerate}
			
			\textbf{Let us prove} \ref{itm:first1}. For every \(h \in H^{2}(\C^n, d\mu) \), using (\ref{Uu1}) we have 
			\begin{eqnarray*}
				\norm{\sum_{u \in \Z^{2n}} e_{u} \otimes U_{u}k_{z} ~h }^{2} &=& \sum_{u,v \in \Z^{2n}} \langle  h,U_{u}k_{z} \rangle \langle U_{v}k_{z} , h \rangle \langle e_{u}, e_{v} \rangle \\ 
				&=& \sum_{u \in \Z^{2n}} |\langle h, U_{u}k_{z}\rangle |^2 = \sum_{u \in \Z^{2n}} |\langle h,k_{u-z} \rangle|^{2}~.
			\end{eqnarray*}
			From Lemma \ref{LemUh}, there are constants \(C_{1}, C_{2}\), such that
			\begin{equation}\label{hkw}
				\sum_{u \in \Z^{2n}} |\langle h,k_{u-z} \rangle|^{2} \le C_{1} \norm{h}^{2} \quad \text{ and } \quad \sum_{v \in \Z^{2n}} |\langle h,k_{v-w} \rangle|^{2} \le C_{2} \norm{h}^{2}~.
			\end{equation}
			Given \(h,g \in H^{2}(\C^n,d\mu)\), we have 
			\begin{eqnarray*}
				|\langle W_{R}h,g \rangle| & \le & \frac{1}{\pi^{2n}} \sum_{\underset{|u-v|>R}{u,v \in \Z^{2n}}} | \langle Bk_{u-z}, k_{v-w} \rangle || \langle h, k_{u-z} \rangle| | \langle k_{v-w} , g \rangle |~.
			\end{eqnarray*}
			Applying the Schur test to this inequality and combining with (\ref{hkw}), we obtain
			\begin{eqnarray*}
				|\langle W_{R}h,g \rangle| &\le & \{H(R)G(R)\}^{\frac{1}{2}} \left(\sum_{u \in \Z^{2n}} |\langle h, k_{u-z} \rangle|^2 \right)^{\frac{1}{2}} \left(\sum_{v \in \Z^{2n}} |\langle h, k_{v-w} \rangle|^2 \right)^{\frac{1}{2}} \\
				&\le & \{C_{1}C_{2} H(R)G(R)\}^{\frac{1}{2}} \norm{h}\norm{g}~,
			\end{eqnarray*}
			where
			\begin{equation*}
				H(R) = \sup_{u\in \Z^{2n}} \sum_{\underset{|u-v|>R}{v\in \Z^{2n} }} |\langle Bk_{u-z}, k_{v-w} \rangle| \quad \text{ and } \quad G(R) = \sup_{v\in \Z^{2n}} \sum_{\underset{|u-v|>R}{u\in \Z^{2n} }} |\langle Bk_{u-z}, k_{v-w} \rangle|~.
			\end{equation*}
			Since \(h,g \in H^{2}(\C^n,d\mu) \) are arbitrary, this leads to
			\begin{equation*}
				\norm{W_{R}} \le \{C_{1}C_{2} H(R)G(R)\}^{\frac{1}{2}}~.
			\end{equation*}
			From Lemma \ref{Lim=0}, we have \(\lim_{R\to \infty} H(R) = 0\) and \(\lim_{R\to \infty} G(R) = 0\). Therefore \(\lim_{R\to \infty} W_{R}=0\).
			
			\textbf{Let us prove} \ref{itm:second1}. That is \(V_{R} \in \) span(\(\mathcal{D}_{0}\)) for every \(R>0\). 
			
			For \(R>0\) and \(v\in \Z^{2n}\), we define \(F_{v} = \{u \in \Z^{2n} : |u-v|\le R \}\). Since \(\Z^{2n}\) is a lattice, there is an \(N \in \N\) such that Card\((F_{v})\le N \) for every \(v \in \Z^{2n}\). Also, we recall that if \(v,v^{'}\in \Z^{2n}\) and \(v \neq v^{'}\) then \(|v-v^{'}|\ge 1\). Then, we write \(V_{R}\) as follows
			\begin{eqnarray*}
				V_{R}= \frac{1}{\pi^{2n}} \sum_{v \in \Z^{2n}} \sum_{u\in F_{v}}\langle Bk_{u-z}, k_{v-w} \rangle k_{v-w} \otimes k_{u-z} .
			\end{eqnarray*}
			To prove \ref{itm:second1}, we define for each \(j \in \{1,\dots, N\}\), the following sets:
			\begin{equation*}
				\Gamma_{j} = \{v \in \Z^{2n}: \text{Card}(F_{v}) = j \} \quad \text{ and } \quad K_{j} = \{v-w: v \in \Gamma_{j}\}~.
			\end{equation*}
			Then \(V_{R} = \frac{1}{\pi^{2n}} \left(X_{1} + \cdots + X_{N}\right)\), where 
			\begin{equation*}
				X_{j} = \sum_{v\in \Gamma_{j}} \sum_{u\in F_{v}}\langle Bk_{u-z}, k_{v-w} \rangle k_{v-w} \otimes k_{u-z} = \sum_{v-w\in K_{j}} \sum_{u\in F_{v}}\langle Bk_{u-z}, k_{v-w} \rangle k_{v-w} \otimes k_{u-z} ~.
			\end{equation*}
			Thus what remains is to show that \(X_{j}  \in \) span(\(\mathcal{D}_{0}\)) for every \(j\). For all \(j\) we can define maps 
			\begin{equation*}
				\gamma^{1}_{j}, \cdots, \gamma^{j}_{j} : K_{j} \rightarrow \C^{n}
			\end{equation*}
			such that \(\{u-z: u \in F_{v}\} = \{\gamma^{1}_{j}(v-w), \cdots, \gamma^{j}_{j} (v-w) \}\) for every \(v \in \Gamma_{j} \). Thus \( X_{j}=  X^{1}_{1} + \cdots + X^{j}_{N}  \), where for each \(\nu \in \{1,\dots,j\}\) we have
			\begin{equation*}
				X^{\nu}_{j} = \sum_{\xi \in K_{j}} \langle Bk_{\gamma^{\nu}_{j}(\xi)}, k_{\xi} \rangle k_{\xi} \otimes k_{\gamma^{\nu}_{j}(\xi)}.
			\end{equation*}
			Referring to the above definitions, for each \(j, \nu\), if \(\xi \in K_{j}\) there exist \(v\in \Gamma_{j}\) and \(u\in F_{v}\) such that \(\xi = v-w\) and \( \gamma^{\nu}_{j} (\xi) = u-z \). Therefore
			\begin{equation*}
				|\xi-\gamma^{\nu}_{j}(\xi)| = |v-w -u + z| \le R + |w| + |z|.
			\end{equation*}
			We deduce from Definition \ref{DoD}  of \(\mathcal{D}_{0}\), that \(X^{\nu}_{j} \in \mathcal{D}_{0} \). This ends the proof.
		\end{proof}

		\begin{pro}\label{DoT1}
			We have \(\mathcal{D}_{0} \subset \mathcal{T}^{(1)}\).
		\end{pro}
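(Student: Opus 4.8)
The plan is to show that a single generating operator $A = \sum_{u \in \Z^{2n}} c_u\, k_u \otimes k_{\gamma(u)}$ of $\mathcal{D}_0$ lies in $\mathcal{T}^{(1)}$; since $\mathcal{T}^{(1)}$ is norm-closed and closed under linear combinations, and $\mathcal{D}$ is the norm-closed span of $\mathcal{D}_0$, this will actually give $\mathcal{D} \subset \mathcal{T}^{(1)}$ as well. The idea is to realize $A$ as a norm-limit of Toeplitz operators. First I would compare $A$ with the ``diagonal'' operator $A_0 = \sum_u c_u\, k_u \otimes k_u$ obtained by replacing $\gamma(u)$ by $u$: using the bound $\norm{u - \gamma(u)} \le C$ together with the Gaussian decay estimate (cf. relation (\ref{kzw})) one controls $\norm{A - A_0}$, and more importantly one reduces the problem to approximating $A$ by operators built from the reproducing-kernel resolution of the identity.

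Second, and this is the heart of the argument, I would approximate the sum over the lattice by an integral. Recall from Proposition \ref{PropPre}\ref{itm:second} that for $f \in L^\infty(\C^n, dV)$ one has $T_f = \frac{1}{\pi^n}\int_{\C^n} f(w)\, k_w \otimes k_w\, dV(w)$, and from \ref{itm:third} that $\frac{1}{\pi^n}\int_{\C^n} k_w \otimes k_w\, dV(w) = I_d$. The strategy is to choose, for each $\varepsilon > 0$, a piecewise-constant (hence bounded measurable) function $f_\varepsilon$ on $\C^n$ that encodes the coefficients $c_u$ on translates of a small cube around each lattice point $u$ — scaled so that $\frac{1}{\pi^n}\int$ over that cube of $k_w \otimes k_w$ is close to $c_u\, k_u \otimes k_u$. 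The operator-norm continuity of $z \mapsto k_z \otimes k_z$ (which follows from Proposition \ref{PropPre}(d) and the equivalence of $\norm{\cdot}_*$ with the $F^2_{1/2}$-norm, exactly as in the proof of Proposition \ref{proYz}), together with the uniform boundedness $\sup_u |c_u| < \infty$, lets one estimate $\norm{T_{f_\varepsilon} - A_0}$ by splitting the integral over each cube and applying the Schur-test bound of Lemma \ref{SchurTest} (with the Gaussian kernel $e^{-\frac14|u-v|^2}$) to sum the per-cube errors. Handling the off-diagonal shift $\gamma$ requires the same cube-wise approximation but with the cube around $u$ carrying the kernel pair $k_u \otimes k_{\gamma(u)}$; here one uses that all the relevant inner products $\langle k_{u-z}, k_{v-w}\rangle$ decay like $e^{-\frac18|u-v|^2}$ so that Lemma \ref{SchurTest} again applies.

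I expect the main obstacle to be the off-diagonal (non-self-adjoint) case: producing a genuine Toeplitz operator — whose integrand is forced to be of the form $f(w)\, k_w \otimes k_w$ with matching ``bra'' and ``ket'' — that approximates $\sum_u c_u\, k_u \otimes k_{\gamma(u)}$ with $\gamma(u) \ne u$ is not immediate, since Toeplitz operators only produce the symmetric rank-one pieces. The resolution is to absorb the shift: since $\norm{u - \gamma(u)}$ is uniformly bounded, one writes $k_u \otimes k_{\gamma(u)}$ as a bounded-coefficient combination of diagonal terms $k_{u'} \otimes k_{u'}$ over nearby lattice points $u'$ plus a remainder that is small after Schur-test summation — effectively a discrete ``smoothing'' — and then each diagonal piece is handled by the Toeplitz integral as above. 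Collecting the estimates, for every $\varepsilon$ one obtains a bounded measurable $f_\varepsilon$ with $\norm{A - T_{f_\varepsilon}} < \varepsilon$, so $A \in \mathcal{T}^{(1)}$, which finishes the proof.
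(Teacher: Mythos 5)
The diagonal half of your plan is sound and coincides with what the paper does in Proposition \ref{YzT1}: approximating $\sum_u c_u\,k_u\otimes k_u$ in norm by Toeplitz operators whose symbols are bounded piecewise-constant functions supported on small disjoint balls around the lattice points, using the norm continuity of $z\mapsto k_z\otimes k_z$ from Proposition \ref{proYz}. But your opening reduction is already shaky: comparing $A=\sum_u c_u\,k_u\otimes k_{\gamma(u)}$ with $A_0=\sum_u c_u\,k_u\otimes k_u$ gives nothing, because $|u-\gamma(u)|$ is only bounded by a fixed constant $C$, not small, so $\norm{A-A_0}$ is of unit order; controlling it does not transfer membership in $\mathcal{T}^{(1)}$ from $A_0$ to $A$.

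The genuine gap is the off-diagonal step, which you yourself identify as the heart of the matter and then dispose of with an unsubstantiated claim: that $k_u\otimes k_{\gamma(u)}$ can be written as a bounded-coefficient combination of diagonal terms $k_{u'}\otimes k_{u'}$ over nearby lattice points plus a remainder that is small after a Schur-test summation. No such decomposition is exhibited, and none is available at this level of generality: any combination $\sum_{u'} a_{u'}\,k_{u'}\otimes k_{u'}$ has integral kernel of the symmetric form $\sum_{u'} a_{u'}k_{u'}(x)\overline{k_{u'}(y)}$, and its deviation from $k_u(x)\overline{k_{\gamma(u)}(y)}$ is not a small, Schur-summable error --- producing the asymmetric pieces from symmetric ones is precisely the difficulty the proposition is about, and the Schur test (which only converts entrywise smallness into norm smallness) cannot create the needed structure. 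The paper has to work considerably harder here: after the diagonal case $Y_z\in\mathcal{T}^{(1)}$ (Proposition \ref{YzT1}), it generates the asymmetric sums $\sum_u c_u\,(U_uK_z)\otimes(U_uK_{z;\alpha})$ by differentiating the family in the parameter $z$ and inducting on $|\alpha|$ (Proposition \ref{PropUuKzT1}); note the approximants there are difference quotients of elements of $\mathcal{T}^{(1)}$, not Toeplitz operators with uniformly bounded symbols, which is exactly why the asymmetry can appear. It then sums the power series of $K_w$ to reach the constant-shift operators $\sum_u c_u\,k_u\otimes k_{u-w}$ (Proposition \ref{ProSumT1}), and finally treats a variable shift $\gamma$ by a compactness and uniform-continuity argument: partitioning the bounded set $\{u-\gamma(u)\}$ into pieces on which $k_{\psi(u)}$ is $\norm{\cdot}_*$-close to a fixed $k_{z_i}$, and controlling the error through Lemma \ref{LemUh}. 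Your proposal contains neither this machinery nor any substitute for it, so the key step of the proof is missing.
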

		To establish the proof of this proposition, we will need the next three propositions.
		\begin{pro}\label{YzT1}
			Suppose that \(\{c_{u}: u \in \Z^{2n}\}\) is a bounded set of complex coefficients. Then for each \(z \in \C^{n}\), the operator \(Y_{z}\) defined in (\ref{Yz}) belongs to \(\mathcal{T}^{(1)}\). 
		\end{pro}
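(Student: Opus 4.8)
The plan is to exhibit \(Y_z\) as an operator-norm limit of Toeplitz operators \(T_{f_\delta}\) with bounded symbols, obtained by "smearing" each rank-one piece \(k_{u-z}\otimes k_{u-z}\) over a tiny ball, and then to control the error by the continuity of the map \(z'\mapsto Y_{z'}\) established in Proposition \ref{proYz}. Concretely, fix \(\delta\in(0,1/2)\); since \(|u-v|\ge 1\) for distinct \(u,v\in\Z^{2n}\), the balls \(\{B(u-z,\delta):u\in\Z^{2n}\}\) are pairwise disjoint. Writing \(\omega_\delta\) for the Lebesgue measure of a ball of radius \(\delta\) in \(\C^n\cong\R^{2n}\), set
\[
f_\delta=\frac{\pi^{n}}{\omega_\delta}\sum_{u\in\Z^{2n}}c_u\,\chi_{B(u-z,\delta)}~.
\]
Because \(\sup_u|c_u|<\infty\) and the balls are disjoint, \(f_\delta\in L^{\infty}(\C^n,dV)\).

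Next I would compute \(T_{f_\delta}\) from the integral representation (\ref{Toef1}): since the balls are disjoint, for each \(w\) at most one summand survives, so
\[
T_{f_\delta}=\frac{1}{\omega_\delta}\sum_{u\in\Z^{2n}}c_u\int_{B(u-z,\delta)}k_w\otimes k_w\,dV(w)~.
\]
As \(\omega_\delta^{-1}\int_{B(u-z,\delta)}dV(w)=1\), one also has \(Y_z=\omega_\delta^{-1}\sum_u c_u\int_{B(u-z,\delta)}k_{u-z}\otimes k_{u-z}\,dV(w)\), hence
\[
T_{f_\delta}-Y_z=\frac{1}{\omega_\delta}\sum_{u\in\Z^{2n}}c_u\int_{B(u-z,\delta)}\bigl(k_w\otimes k_w-k_{u-z}\otimes k_{u-z}\bigr)\,dV(w)~.
\]
In each integral I substitute \(w=u-z'\) (so \(z'\in B(z,\delta)\) and \(dV(w)=dV(z')\)), turning the integrand into \(k_{u-z'}\otimes k_{u-z'}-k_{u-z}\otimes k_{u-z}\). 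Pairing with \(h,g\in H^{2}(\C^n,d\mu)\) and using the uniform estimate (\ref{hkw}) — which bounds \(\sum_u|\langle h,k_{u-z'}\rangle|^{2}\) by \(C\norm{h}^{2}\) independently of \(z'\) — one may apply Fubini to interchange \(\sum_u\) with \(\int_{B(z,\delta)}\). Since \(\sum_u c_u\,k_{u-z'}\otimes k_{u-z'}=Y_{z'}\) by definition, the entire error collapses:
\[
T_{f_\delta}-Y_z=\frac{1}{\omega_\delta}\int_{B(z,\delta)}\bigl(Y_{z'}-Y_z\bigr)\,dV(z')~,\qquad\text{so}\qquad\norm{T_{f_\delta}-Y_z}\le\sup_{|z'-z|\le\delta}\norm{Y_{z'}-Y_z}~.
\]

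By Proposition \ref{proYz}, the map \(z'\mapsto Y_{z'}\) from \(\C^n\) to \(\mathcal{B}(H^{2}(\C^n,d\mu))\) is continuous in the operator norm, so the right-hand side tends to \(0\) as \(\delta\to 0^{+}\). Thus \(Y_z\) is an operator-norm limit of Toeplitz operators with symbols in \(L^{\infty}(\C^n,dV)\), which is exactly the statement \(Y_z\in\mathcal{T}^{(1)}\).

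I expect the only delicate point to be the middle step: carrying out the change of variables and the Fubini interchange cleanly enough that the approximation error telescopes into \(\omega_\delta^{-1}\int_{B(z,\delta)}(Y_{z'}-Y_z)\,dV(z')\). One can, if desired, make this more transparent via the unitary involutions \(U_u\), since \(k_{u-z'}\otimes k_{u-z'}=U_u(k_{z'}\otimes k_{z'})U_u\) by (\ref{Uu2}) and hence \(\sum_u c_u\,U_u\bigl(k_{z'}\otimes k_{z'}-k_z\otimes k_z\bigr)U_u=Y_{z'}-Y_z\); but this reformulation is not strictly needed, as the identification of the sum with \(Y_{z'}-Y_z\) is immediate from the definition of \(Y_{z'}\) for the same coefficient sequence \(\{c_u\}\). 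Everything else reduces to bounded-operator bookkeeping once Proposition \ref{proYz} is in hand.
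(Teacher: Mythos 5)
Your proof is correct and is essentially the paper's own argument: both approximate \(Y_z\) in operator norm by Toeplitz operators whose symbols are \(\frac{\pi^n}{|B(0,\delta)|}\sum_u c_u\chi_{B(u-z,\delta)}\) on disjoint balls, identify these with averages of \(Y_{z'}\) over a small ball, and conclude via the norm continuity from Proposition \ref{proYz}. The only (harmless) differences are that you run the identification from the Toeplitz side rather than from the average \(A_\varepsilon\), and you treat general \(z\) directly (the shifted centers \(u-z\) still being \(1\)-separated), which bypasses the paper's extra reduction to \(z=0\) followed by a partition of \(\Z^{2n}\).
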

		\begin{proof}
			\begin{enumerate}[label=(\alph*)]
				\item Let us first show that \(Y_{0} \in \mathcal{T}^{(1)}\). We have \(|u-v|\ge 1\) for all \(u \neq v \in \Z^{2n}\). Hence \(B(u,\frac{1}{2}) \cap B(v,\frac{1}{2}) = \emptyset \) for \(u\neq v\). For each \(0 < \varepsilon < \frac{1}{2}\), define the operator
				\begin{equation*}
					A_{\varepsilon} = \frac{1}{|B(0,\varepsilon)|} \int_{B(0,\varepsilon)} Y_{z} dV(z) .
				\end{equation*}
				From Proposition \ref{proYz}, we have the norm continuity of the map \(z\mapsto Y_{z}\) and it implies that
				\begin{equation*}
					\lim_{\varepsilon \to 0} \norm{Y_{0}-A_{\varepsilon}} = 0.
				\end{equation*}
				This comes from the fact that
				\begin{eqnarray*}
					\norm{Y_{0}-A_{\varepsilon}} &=& \norm{\frac{1}{|B(0,\varepsilon)|} \int_{B(0,\varepsilon)} (Y_{0}-Y_{z}) ~dV(z)} \le \frac{1}{|B(0,\varepsilon)|} \int_{B(0,\varepsilon)} \norm{Y_{0}-Y_{z}} ~dV(z)
				\end{eqnarray*}
				and \(\lim_{z\to 0}\norm{Y_{z}-Y_{0}}=0\) .
				
				Thus to prove the membership \(Y_{0}\in \mathcal{T}^{(1)}\), it suffices to show that each \(A_{\varepsilon}\) is a Toeplitz operator with bounded symbol. Indeed, with the change of variables \(w=u-z\), we have
				\begin{eqnarray*}
					A_{\varepsilon} &=& \frac{1}{|B(0,\varepsilon)|} \int_{B(0,\varepsilon)} Y_{z} dV(z) = \frac{1}{|B(0,\varepsilon)|} \sum_{u \in \Z^{2n}}  \int_{B(0,\varepsilon)} c_{u} k_{u-z} \otimes k_{u-z}~ dV(z) \\
					&=& \frac{1}{|B(0,\varepsilon)|} \sum_{u \in \Z^{2n}}  \int_{B(u,\varepsilon)} c_{u} k_{w} \otimes k_{w} ~dV(w) = \frac{1}{\pi^n} \int_{\C^n} f_{\varepsilon}(w) k_{w} \otimes k_{w} ~dV(w)~,
				\end{eqnarray*}
				where 
				\begin{eqnarray*}
					f_{\varepsilon}(w) = \frac{\pi^n}{|B(0,\varepsilon)|} \sum_{u \in \Z^{2n}}c_{u} \chi_{B(u,\varepsilon)}(w)
				\end{eqnarray*}
				belongs to \(L^{\infty}(\C^n,dV)\), since \(0<\varepsilon<\frac{1}{2}\) and \(B(u,\varepsilon)\cap B(v,\varepsilon) = \emptyset\) for \(u\neq v \in \Z^{2n} \). From (\ref{Toef1}) we observe that \(A_{\varepsilon} = T_{f_{\varepsilon}}\). Whence \(Y_{0} \in \mathcal{T}^{(1)} \).
				\item Let \(z \in \C^{n}\). There is a partition \(\Z^{2n} = \Gamma_{1} \cup \cdots \cup\Gamma_{m} \) such that for every \(i \in \{1,\dots,m\}\), \(|u-v|\ge 1\) for \(u\neq v \in \Gamma_{i}\). Set \(K_{i} = \{u-z: u \in \Gamma_{i}\}\). We have \(Y_{z} = Y_{z,1} + \cdots + Y_{z,m}\), where 
				\begin{equation*}
					Y_{z,i} = \sum_{u-z \in K_{i}} c_{u}k_{u-z} \otimes k_{u-z},
				\end{equation*}
				for all \(i \in \{1,\dots,m\}\). By (a), we have \(Y_{z,i} \in \mathcal{T}^{(1)}\) for all \(i \in \{1,\dots,m\}\). Hence \(Y_{z} \in \mathcal{T}^{(1)}\).
			\end{enumerate}
		\end{proof}
		
		To continue our work, we need to introduce the following functions. For each pair \(\alpha \in \N^{n} \) and \(z \in \C^{n} \), we define
		\begin{equation*}
			K_{z;\alpha}(\zeta) = \zeta^{\alpha} e^{\langle \zeta,z \rangle} = \zeta^{\alpha}K_{z}(\zeta)~, \qquad \zeta \in \C^n,
		\end{equation*}
		where \(\alpha = (\alpha_1, \dots, \alpha_n) \). We recall that \(|\alpha| = \alpha_1 + \cdots + \alpha_n\) and \(\zeta^{\alpha} = \zeta^{\alpha_1}_1 \cdots \zeta^{\alpha_n} \).
		
		\begin{pro}\label{PropUuKzT1}
			Let \(\{c_{u} : u \in \Z^{2n}\}\) be a bounded set of complex numbers. For every pair \(\alpha \in \N^{n} \) and \(z \in \C^{n} \), we have
			\begin{equation*}
				\sum_{u \in \Z^{2n}} c_{u}(U_{u}K_{z}) \otimes (U_{u}K_{z;\alpha}) \in \mathcal{T}^{(1)}.
			\end{equation*}
		\end{pro}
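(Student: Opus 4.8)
The plan is to argue by induction on $|\alpha|$, where at each level $|\alpha|=m$ the inductive statement is the \emph{full} assertion of the proposition: for every $z\in\C^n$ and every bounded coefficient family $\{c_u\}$, the operator $\sum_{u}c_u(U_uK_z)\otimes(U_uK_{z;\alpha})$ belongs to $\mathcal{T}^{(1)}$. The base case $|\alpha|=0$ is immediate: since $K_{z;0}=K_z$, the second identity in (\ref{Uu2}) gives $(U_uK_z)\otimes(U_uK_z)=e^{|z|^2}k_{u-z}\otimes k_{u-z}$, so $\sum_u c_u(U_uK_z)\otimes(U_uK_z)=e^{|z|^2}Y_z$, which lies in $\mathcal{T}^{(1)}$ by Proposition \ref{YzT1}.

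For the inductive step, write $\alpha=\beta+\varepsilon_j$, where $|\beta|=m$ and $\varepsilon_j$ is the multi-index that increments the $j$-th entry by $1$, and consider the operator-valued function
\[
\Psi_\beta(z)\;:=\;\sum_{u\in\Z^{2n}}c_u\,(U_uK_z)\otimes(U_uK_{z;\beta})\,,\qquad z\in\C^n .
\]
I would first show that $\Psi_\beta$ is a well-defined $C^{\infty}$ map from $\C^n$ into $\mathcal{B}(H^2(\C^n,d\mu))$ which may be differentiated term by term, so that $\partial\Psi_\beta/\partial z_j(z)=\sum_{u}c_u\,(U_uK_z)\otimes(U_uK_{z;\beta+\varepsilon_j})$. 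The device is the factorization $\Psi_\beta(z)=\widetilde A_z\,(\widetilde B^{(\beta)}_z)^{*}$ with $\widetilde A_z=\sum_u c_u(U_uK_z)\otimes e_u$ and $\widetilde B^{(\beta)}_z=\sum_u (U_uK_{z;\beta})\otimes e_u$. By Lemma \ref{LemUh}, applied with $h_u=c_uK_z$ for $\widetilde A_z$ and with $h_u=K_{z;\beta}$ for $\widetilde B^{(\beta)}_z$ (both families having finite, uniformly bounded $\norm{\cdot}_*$-norm), these two operators are bounded; applying the same lemma to difference quotients, exactly as in the proof of Proposition \ref{proYz}, shows that $z\mapsto\widetilde A_z$ and $z\mapsto\widetilde B^{(\beta)}_z$ are $C^{\infty}$, in fact antiholomorphic, because $z\mapsto K_z$ and $z\mapsto K_{z;\beta}$ are $\norm{\cdot}_*$-antiholomorphic with $\partial K_{z;\beta}/\partial\bar z_j=K_{z;\beta+\varepsilon_j}$ and $\partial K_{z;\beta}/\partial z_j=0$. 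Since $\widetilde A_z$ is antiholomorphic, $\partial\widetilde A_z/\partial z_j=0$, so the Leibniz rule gives $\partial\Psi_\beta/\partial z_j=\widetilde A_z\,\partial(\widetilde B^{(\beta)}_z)^{*}/\partial z_j=\widetilde A_z\,(\widetilde B^{(\beta+\varepsilon_j)}_z)^{*}$, which is the stated formula.

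Next I would invoke the inductive hypothesis, which asserts $\Psi_\beta(z)\in\mathcal{T}^{(1)}$ for \emph{every} $z\in\C^n$, together with the fact that $\mathcal{T}^{(1)}$ is a norm-closed linear subspace of $\mathcal{B}(H^2(\C^n,d\mu))$. Decomposing $\partial/\partial z_j=\tfrac12\bigl(\partial/\partial x_j-i\,\partial/\partial y_j\bigr)$ (with $z_j=x_j+iy_j$) and noting that $\partial\Psi_\beta/\partial x_j(z)$ is the operator-norm limit, as the \emph{real} parameter $t\to0$, of difference quotients $t^{-1}\bigl(\Psi_\beta(z+t\,e_j)-\Psi_\beta(z)\bigr)$ taken along the $j$-th coordinate axis — each of which lies in $\mathcal{T}^{(1)}$ — and likewise for $\partial/\partial y_j$, we conclude $\partial\Psi_\beta/\partial z_j(z)\in\mathcal{T}^{(1)}$ for each $z$. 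Combined with the term-by-term differentiation formula, this yields $\sum_u c_u(U_uK_z)\otimes(U_uK_{z;\beta+\varepsilon_j})\in\mathcal{T}^{(1)}$ for every $z$, completing the induction.

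The main obstacle is the first half of the inductive step: the series defining $\Psi_\beta$ does not converge in operator norm (its summands do not decay as $|u|\to\infty$), so neither its smoothness nor termwise differentiation is automatic; this is precisely what the factorization into the two bounded operators $\widetilde A_z$ and $\widetilde B^{(\beta)}_z$, controlled by the discrete Schur estimate of Lemma \ref{LemUh}, is designed to furnish. A secondary subtlety is that no Cauchy-integral argument in $z$ is available, since $\partial^{\alpha}/\partial z^{\alpha}$ acts on a merely real-analytic — not holomorphic — operator-valued function: recovering its value forces the use of genuine real difference quotients along the diagonal, which is exactly why the induction must carry the statement for all $z$ at once.
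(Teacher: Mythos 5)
Your proposal is correct and is essentially the paper's own argument: the same induction on \(|\alpha|\) with base case \(e^{|z|^2}Y_z\in\mathcal{T}^{(1)}\), the same factorization through Lemma \ref{LemUh} to get operator-norm convergence of the difference quotients, and the same use of the induction hypothesis at the shifted points \(z+tb\), \(z+itb\) together with the norm-closedness of \(\mathcal{T}^{(1)}\). Your Wirtinger-derivative packaging \(\partial/\partial z_j=\tfrac12\left(\partial_{x_j}-i\,\partial_{y_j}\right)\), where the antiholomorphic first slot drops out, is exactly the paper's step of forming the two real directional limits \(X\) and \(Y\) and taking \(\tfrac12(X+Y)\) to cancel the cross term.
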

		
		\begin{proof}
			We will prove this proposition by an induction on \(|\alpha|\).
			
			If \(|\alpha|=0\), that is \(\alpha=0\), then from (\ref{Uu2}) and Proposition \ref{YzT1} , it holds
			\begin{eqnarray*}
				\sum_{u \in \Z^{2n}} c_{u}(U_{u}K_{z}) \otimes (U_{u}K_{z;0} ) &=& \sum_{u \in \Z^{2n}} c_{u}(U_{u}K_{z} ) \otimes (U_{u}K_{z}) \\
				& =& e^{|z|^{2}}\sum_{u \in \Z^{2n}} c_{u}~ k_{u-z} \otimes k_{u-z}=  e^{|z|^2} Y_{z}\in \mathcal{T}^{(1)}~.
			\end{eqnarray*}
			
			Let \(k \in \N \), assume that the proposition is true for every \(\alpha \in \N^n \) satisfying the condition \(|\alpha|\le k\). Now consider the case where \(\alpha \in \N^n \) is such that \(|\alpha|=k+1\). Then, we can decompose \(\alpha\) in the form \( \alpha=a+b \), where \(|a|=k\) and \(|b|=1\). Thus, there exists \(\nu \in \{1,\dots,n\} \) such that \(b_{\nu}=1\) and \(b_{j}=0\) for \(j\neq \nu\). By the induction hypothesis , we have 
			\begin{equation}\label{Uza}
				\sum_{u \in \Z^{2n}} c_{u} (U_{u}K_{z}) \otimes (U_{u}K_{z;a}) \in \mathcal{T}^{(1)} \qquad \text{ for every } z \in \C^{n}~.
			\end{equation}
			Let \(z\in \C^n \). For each \(t> 0 \), we define the following operators
			\begin{equation*}
				A_{t} = \sum_{u \in \Z^{2n}} c_{u} (U_{u} K_{z+tb}) \otimes (U_{u}K_{z+tb;a}) \quad \text{ and } \quad B_{t} = \sum_{u \in \Z^{2n}} c_{u} (U_{u} K_{z+itb}) \otimes (U_{u}K_{z+itb;a})~.
			\end{equation*}
			We also define 
			\begin{equation*}
				X = \sum_{u \in \Z^{2n}} c_{u} \{ (U_{u} K_{z}) \otimes (U_{u}K_{z;\alpha}) + (U_{u}K_{z;b}) \otimes (U_{u} K_{z;a}) \} \text{ and } 
			\end{equation*}
			\begin{equation*}
				Y = \sum_{u \in \Z^{2n}}c_{u} \{ (U_{u}K_{z}) \otimes (U_{u}K_{z;\alpha}) - (U_{u}K_{z;b} \otimes (U_{u}K_{z;a}) ) \}.
			\end{equation*}
			We will show that 
			\begin{align}
				\lim_{t \to 0} \norm{ \frac{1}{t}(A_{t}-A_{0}) -X}  &= 0\,,  \label{AtA0} \quad \text{ and } \\
				\lim_{t \to 0} \norm{ \frac{1}{it} (B_{t}-B_{0}) -Y } &= 0. \label{BtB0}
			\end{align}
			Before getting to their proofs, let us first see the consequence of these limits. By (\ref{Uza}), we have \( A_{t} \in \mathcal{T}^{(1)} \)  and \( B_{t} \in \mathcal{T}^{(1)} \) for all \(t>0\). Hence  (\ref{AtA0}) and  (\ref{BtB0}) implies that \(X,Y \in \mathcal{T}^{(1)} \). Therefore
			\begin{equation*}
				\sum_{u \in \Z^{2n}} c_{u}(U_{u}K_{z}) \otimes (U_{u}K_{z;\alpha}) = \frac{1}{2} (X+Y) \in \mathcal{T}^{(1)},
			\end{equation*}
			completing the induction on \(|\alpha|\).
			
			Let us prove (\ref{AtA0}). We have \(\frac{1}{t}(A_{t}-A_{0}) = G_{t} + H_{t} \), where
			\begin{align*}
				H_{t} &= \frac{1}{t} \sum_{u \in \Z^{2n}} c_{u} (U_{u}K_{z+tb}) \otimes \{U_{u}(K_{z+tb;a} - K_{z;a}) \} \quad \text{ and }\\
				G_{t} &= \frac{1}{t} \sum_{u \in \Z^{2n}} c_{u} \{ U_{u}(K_{z+tb} - K_{z}) \} \otimes (U_{u}K_{z;a})~.
			\end{align*}
			Similarly, we write \(X= V+ W\), where
			\begin{equation*}
				V = \sum_{u \in \Z^{2n}} (U_{u}K_{z}) \otimes (U_{u}K_{z;\alpha}) \quad \text{ and } \quad W = \sum_{u \in \Z^{2n}} c_{u} (U_{u}K_{z;b}) \otimes (U_{u}K_{z;a})~.
			\end{equation*}
			Then \( \norm{ \frac{1}{t}(A_{t}-A_{0}) -X} \le \norm{H_{t} -V} + \norm{ G_{t} -W} \), and (\ref{AtA0}) will follow if we prove that 
			\begin{align}
				\lim_{t\to 0} \norm{H_{t} - V} &= 0 \quad \text{ and } \label{HtV} \\
				\lim_{ t \to 0} \norm{ G_{t} -W} &=0~. \label{GtW}
			\end{align}
			To prove (\ref{HtV}), we write \(H_{t} -V = S_{t} + T_{t} \), where
			\begin{align*}
				S_{t} &= \sum_{u \in \Z^{2n}} c_{u} (U_{u}K_{z+tb}) \otimes \{ U_{u}(t^{-1} (K_{z+tb;a} - K_{z;a}) -K_{z;\alpha} ) \} \quad  \text{  and  } \\
				T_{t} &= \sum_{u \in \Z^{2n}} c_{u}\{ U_{u} (K_{z+tb} - K_{z}) \} \otimes (U_{u}K_{z;\alpha}).
			\end{align*}
			Thus to prove (\ref{HtV}), we just have to prove that \(\lim_{t \to 0} \norm{S_{t}} = 0 \) and \(\lim_{t \to 0}\norm{T_{t}} = 0 \). To do this, we factor \(S_{t}\) in the form \(S_{t} = S^{(1)}_{t} \left(S^{(2)}_{t}\right)^{*} \) where 
			\begin{eqnarray*}
				S^{(1)}_{t} = \sum_{u \in \Z^{2n}} c_{u} (U_{u}K_{z+tb}) \otimes e_{u} \quad \text{ and } \quad  S^{(2)}_{t} = \sum_{u \in \Z^{2n}} \{ U_{u}(t^{-1} ( K_{z+tb;a}-K_{z;a}) -K_{z;\alpha}) \}\otimes e_{u}~.
			\end{eqnarray*}
			Then it follows from Lemma \ref{LemUh} that 
			\begin{equation*}
				\norm{ S^{(1)}_{t} } \le C \norm{K_{z+tb}}_{*} \quad \text{ and } \quad \norm{S^{(2)}_{t}} \le C \norm{ \frac{1}{t}(K_{z+tb;a}-K_{z;a}) -K_{z;\alpha} }_{*}
			\end{equation*}
			Since \(a + b = \alpha\) with \(|b|=1\) and \(|a|=k\), it follows from the limit
			\begin{equation*}
				\lim_{ t \to 0}\left( \frac{K_{z+tb;a}(\zeta)-K_{z;a}(\zeta)}{t}  \right)= \lim_{ t \to 0}\left( \zeta^{a} e^{\langle \zeta , z \rangle} \frac{ e^{t \langle \zeta, b \rangle}-1 }{t}\right) = \zeta^{a} e^{\langle \zeta , z \rangle} \zeta^b = K_{z;\alpha}(\zeta)~,
			\end{equation*}
			that \( \lim_{t \to 0} \norm{\frac{1}{t}(K_{z+tb;a}-K_{z;a}) -K_{z;\alpha}}_{*}=0.\) Also,
			\begin{equation*}
				\lim_{t \to 0} \norm{K_{z+tb}}_{*} = \lim_{t \to 0} (2\pi)^{n/2} e^{|z+tb|^2}  = (2\pi)^{n/2} e^{|z|^2}<\infty ~ .
			\end{equation*}
			Therefore,
			\begin{equation*}
				\norm{S_{t}} \le \norm{S^{(1)}_{t}} \norm{S^{(2)}_{t}} \le C \norm{K_{z+tb}}_{*} \norm{ \frac{1}{t}(K_{z+tb;a}-K_{z;a}) -K_{z;\alpha} }_{*} \underset{t\to 0 }{\longrightarrow} 0~.
			\end{equation*}
			For \(T_{t}\), we have the factorization \(T_{t} = T^{(1)}_{t} \left(T^{(2)}\right)^{*} \), where
			\begin{equation*}
				T^{(1)}_{t} = \sum_{u \in \Z^{2n}} c_{u} \{U_{u}(K_{z+tb} -K_{z}) \} \otimes e_{u} \quad \text{ and } \quad T^{(2)} = \sum_{u \in \Z^{2n}} (U_{u}K_{z;\alpha}) \otimes e_{u}~.
			\end{equation*}
			By Lemma \ref{LemUh}, \( \norm{T^{(1)}_{t}} \le C \norm{K_{z+tb}-K_{z}}_{*} \) and \(T^{(2)}\) is bounded. Since \( \lim_{t \to 0} \norm{K_{z+tb}-K_{z}}_{*}=0  \), it follows that \( \norm{T_{t}} \le \norm{T^{(2)}} \norm{T^{(1)}_{t}}  \underset{t\to 0 }{\longrightarrow} 0  \)~. This shows (\ref{HtV}).
			
			To prove (\ref{GtW}), note that 
			\begin{equation*}
				G_{t} - W = \sum_{u \in \Z^{2n}} c_{u} \{ U_{u} ( t^{-1} (K_{z+tb} -K_{z}) - K_{z;b} ) \} \otimes (U_{u} K_{z;a}) = Z_{t} T^{(2)*}~,
			\end{equation*}
			where 
			\begin{equation*}
				Z_{t} = \sum_{u \in \Z^{2n}} c_{u} \{ U_{u} ( t^{-1} (K_{z+tb} -K_{z}) - K_{z;b} ) \}\otimes e_{u}~.
			\end{equation*}
			From Lemma \ref{LemUh} , we have 
			\begin{equation*}
				\norm{Z_{t}} \le C \norm{ t^{-1} (K_{z+tb} -K_{z}) - K_{z;b} }_{*} \underset{t\to 0 }{\longrightarrow} 0 ~.
			\end{equation*}
			Hence \(\norm{G_{t} - W} \le \norm{T^{(2)}} \norm{ Z_{t}} \underset{t\to 0 }{\longrightarrow} 0  \). Thus we have completed the proof of (\ref{AtA0}).
			
			The proof of (\ref{BtB0}) uses essentially the same arguments as above. This finishes the proof of the proposition.
		\end{proof}
		
		\begin{pro}\label{ProSumT1}
			Let \(\{c_{u} : u \in \Z^{2n}\}\) be a bounded set of complex coefficients. Then for every \(w\in \C^{n}\) we have 
			\begin{equation*}
				\sum_{u \in \Z^{2n}} c_{u} k_{u} \otimes k_{u-w} \in \mathcal{T}^{(1)}~.
			\end{equation*}
		\end{pro}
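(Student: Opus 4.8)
The plan is to deduce this from Proposition \ref{PropUuKzT1} by approximating $k_{w}$ in the $\norm{\cdot}_{*}$-norm by finite linear combinations of the functions $K_{0;\alpha}$. First I would reformulate the operator: by (\ref{Uu1}) one has $k_{u}=U_{u}K_{0}$ (take $z=0$, noting $k_{0}=K_{0}$) and $k_{u-w}=e^{-i\, Im\langle u,w\rangle}U_{u}k_{w}$ (take $z=w$), so, putting $d_{u}=c_{u}e^{i\, Im\langle u,w\rangle}$ --- again a bounded family ---
\begin{equation*}
	\sum_{u\in\Z^{2n}} c_{u}\,k_{u}\otimes k_{u-w}\;=\;\sum_{u\in\Z^{2n}} d_{u}\,(U_{u}K_{0})\otimes(U_{u}k_{w})\;=\;AB^{*},
\end{equation*}
where $A=\sum_{u} d_{u}(U_{u}K_{0})\otimes e_{u}$ and $B=\sum_{u}(U_{u}k_{w})\otimes e_{u}$ are bounded by Lemma \ref{LemUh} (for $B$, applied to the $u$-independent function $k_{w}\in\mathcal{H}_{*}$), exactly as in Proposition \ref{proYz}.

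Next I would approximate $k_{w}$. Since $k_{w}=e^{-|w|^2/2}K_{w}$ and $K_{w}(\zeta)=e^{\langle\zeta,w\rangle}=\sum_{\alpha\in\N^n}\frac{\bar w^{\alpha}}{\alpha!}\zeta^{\alpha}=\sum_{\alpha\in\N^n}\frac{\bar w^{\alpha}}{\alpha!}K_{0;\alpha}$, and the monomials are mutually orthogonal in $\mathcal{H}_{*}$ with $\norm{K_{0;\alpha}}_{*}^{2}=(2\pi)^{n}2^{|\alpha|}\alpha!$ (a coordinatewise Gaussian integral), the coordinatewise bound $\sum_{k\ge0}(\sqrt{2}\,|w_{j}|)^{k}/\sqrt{k!}<\infty$ shows this series converges absolutely in $\norm{\cdot}_{*}$; in particular the partial sums $p_{N}=e^{-|w|^2/2}\sum_{|\alpha|\le N}\frac{\bar w^{\alpha}}{\alpha!}K_{0;\alpha}$ satisfy $\norm{k_{w}-p_{N}}_{*}\to0$.

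Finally I would assemble everything. For each $N$, expanding by linearity,
\begin{equation*}
	\sum_{u} d_{u}\,(U_{u}K_{0})\otimes(U_{u}p_{N})\;=\;e^{-|w|^2/2}\!\!\sum_{|\alpha|\le N}\frac{w^{\alpha}}{\alpha!}\,\Theta_{\alpha},\qquad\text{where}\quad\Theta_{\alpha}:=\sum_{u} d_{u}\,(U_{u}K_{0})\otimes(U_{u}K_{0;\alpha});
\end{equation*}
by Proposition \ref{PropUuKzT1} (with $z=0$ and the bounded coefficients $d_{u}$) each $\Theta_{\alpha}\in\mathcal{T}^{(1)}$, so the left-hand side, a finite linear combination, lies in $\mathcal{T}^{(1)}$. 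On the other hand, applying Lemma \ref{LemUh} to the $u$-independent function $k_{w}-p_{N}$,
\begin{equation*}
	\norm{\,AB^{*}-\sum_{u} d_{u}\,(U_{u}K_{0})\otimes(U_{u}p_{N})\,}\;=\;\norm{\,\sum_{u} d_{u}\,(U_{u}K_{0})\otimes\big(U_{u}(k_{w}-p_{N})\big)\,}\;\le\;C\,\norm{A}\,\norm{k_{w}-p_{N}}_{*}\;\longrightarrow\;0,
\end{equation*}
and since $\mathcal{T}^{(1)}$ is norm-closed, $\sum_{u} c_{u}\,k_{u}\otimes k_{u-w}=AB^{*}\in\mathcal{T}^{(1)}$.

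The only delicate point is the interchange of the $\sum_{u}$ and $\sum_{\alpha}$ summations; I would keep it confined to the finite sums above and let Lemma \ref{LemUh} do the rest, converting the $\norm{\cdot}_{*}$-approximation of $k_{w}$ by combinations of the $K_{0;\alpha}$ into operator-norm approximation of $AB^{*}$ by finite combinations of the $\Theta_{\alpha}\in\mathcal{T}^{(1)}$. (Alternatively, one could phrase Step~2 as density of polynomials in $\mathcal{H}_{*}$, avoiding the explicit norm $\norm{K_{0;\alpha}}_{*}$ altogether.)
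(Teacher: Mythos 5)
Your proposal is correct and follows essentially the same route as the paper: both reduce to Proposition \ref{PropUuKzT1} at \(z=0\), approximate \(K_w\) (equivalently \(k_w\)) in the \(\norm{\cdot}_{*}\)-norm by polynomial partial sums of its exponential/Taylor expansion, and use Lemma \ref{LemUh} together with the tensor factorization through the \(e_u\) to convert this into operator-norm approximation by elements of \(\mathcal{T}^{(1)}\), concluding by norm-closedness. The only differences are cosmetic: the paper groups the expansion into powers of \(g_w(\zeta)=\langle\zeta,w\rangle\) while you use multi-index partial sums directly (and in fact you justify the \(\norm{\cdot}_{*}\)-convergence more explicitly than the paper does), and your unimodular factor \(e^{i\,Im\langle u,w\rangle}\) correctly accounts for the conjugate-linearity of the second tensor slot.
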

		
		\begin{proof}
			For each \( \alpha \in \N^n \), we define on \(\C^n\) the following monomial function
			\begin{equation*}
				p_{\alpha}(\zeta) = \zeta^{\alpha}~.
			\end{equation*}
			For \(u \in \Z^{2n} \) and \(w \in \C^n \), we define
			\begin{equation*}
				d_{u}(w) = c_{u} e^{-iIm\langle u, w \rangle}.
			\end{equation*}
			For all \(\alpha \in\N^n \) and \(u \in \Z^{2n} \),  we have \(K_{0;\alpha}=p_{\alpha}\) and \(U_{u}K_{0} =U_{u}\mathbbm{1}=k_{u}\). Thus, applying Proposition \ref{PropUuKzT1} in the case \(z=0\), we have that
			\begin{equation*}
				\sum_{u \in \Z^{2n}} c_{u}k_{u} \otimes (U_{u}p_{\alpha}) =\sum_{u \in \Z^{2n}} c_{u}(U_{u}K_{0}) \otimes (U_{u}K_{0;\alpha})  ~ \in \mathcal{T}^{(1)}~.
			\end{equation*}
			Hence 
			\begin{equation}\label{UuK0}
				\sum_{u \in \Z^{2n}} d_{u}(w)k_{u} \otimes (U_{u}p_{\alpha}) ~ \in \mathcal{T}^{(1)}~.
			\end{equation}
			We define the function \(g_{w}(\zeta) = \langle \zeta , w \rangle~, \zeta \in \C^n\) . For each \(j \in \N\), we define the operators 
			\begin{equation*}
				A_{j} = \sum_{u \in \Z^{2n}} d_{u}(w) k_{u} \otimes U_{u} g^{j}_{w} \quad \text{ and } \quad G = \sum_{u \in \Z^{2n}} (U_{u}K_{w}) \otimes e_{u}~.
			\end{equation*}
			Since each \(g^{j}_{w}\) is in the linear span of \(\{p_{\alpha} : \alpha \in \N^n\} \), then it follows from (\ref{UuK0}) that \(A_{j} \in \mathcal{T}^{(1)}\) for every \(j \in \N\). Also, for each \(j \in \N\), we have the factorization \(A_{j} = TB^{*}_{j} \), where
			\begin{equation*}
				T=\sum_{u \in \Z^{2n}} d_{u}(w) k_{u}\otimes e_{u} \quad \text{ and } \quad B_{j} = \sum_{u \in \Z^{2n}} (U_{u}g^{j}_{w})\otimes e_{u}~.
			\end{equation*}
			Hence, by Lemma \ref{LemUh}, the operator \(T\) is bounded and for each \(k \in \N \)
			\begin{eqnarray}\label{GBj}
				\norm{G-\sum_{j=0}^{k} \frac{1}{j!} B_{j} } &=& \norm{ \sum_{u \in \Z^{2n}} U_{u}\{ K_{w}-\sum_{j=0}^{k} \frac{1}{j!} g^{j}_{w} \} \otimes e_{u}  } \nonumber\\
				& \le& C \norm{ K_{w} - \sum_{j=0}^{k} \frac{1}{j!} g^{j}_{w}}_{*}~.
			\end{eqnarray}
			Using the expansion formula \( e^{c} =\displaystyle\sum_{j=0}^{\infty} \frac{1}{j!} c^j  \) for every \(c \in \C^n\,,\) we have
			\begin{equation*}
				\lim_{k \to \infty} \norm{K_{w} - \sum_{j=0}^{k} \frac{1}{j!} g^{j}_{w} }_{*} =0~.
			\end{equation*}
			Combining this with (\ref{GBj}), we obtain
			\begin{equation*}
				\lim_{k \to \infty} \norm{ TG^{*} - \sum_{j=0}^{k} \frac{1}{j!} A_{j} } \le \lim_{k \to \infty} \norm{T} \norm{G^{*}-\sum_{j=0}^{k} \frac{1}{j!} B^{*}_{j} } =0 ~.
			\end{equation*}
			Since each \(A_{j}\) belongs to \(\mathcal{T}^{(1)}\) and \(k_{w} = e^{-\frac{|w|^2}{2}}K_{w}\), we conclude that 
			\begin{equation*}
				\sum_{u \in \Z^{2n}} d_{u}(w) k_{u} \otimes (U_{u}K_{w}) = TG^{*} \in \mathcal{T}^{(1)} \quad \text{ and then } \quad \sum_{u \in \Z^{2n}} d_{u}(w) k_{u} \otimes (U_{u}k_{w}) \in \mathcal{T}^{(1)}~.
			\end{equation*}
			From the definition of \(d_{u}(w)\) and (\ref{Uu1}), we have that 
			\begin{equation*}
				\sum_{u \in \Z^{2n}} d_{u}(w) k_{u} \otimes (U_{u}k_{w})=\sum_{u \in \Z^{2n}} c_{u} k_{u} \otimes k_{u-w}.
			\end{equation*}
			This completes the proof.
		\end{proof}
		
		\begin{proof}[\textbf{Proof of Proposition \ref{DoT1}} ]
			Let \(\{c_{u}: u \in \Z^{2n}\}\) be a bounded set of coefficients and \(\gamma: \Z^{2n} \rightarrow \C^n\) a map for which there exists \(0<C< \infty\) such that \(\norm{u-\gamma(u)} \le C \) for every \(u\in \Z^{2n}\). Let \(\mathcal{K}=\{w\in \C^{n}: \norm{w}\le C\}\). We want to show that the operator
			\begin{equation*}
				T = \sum_{u \in \Z^{2n}} c_{u} k_{u} \otimes k_{\gamma(u)}
			\end{equation*}
			belongs to \(\mathcal{T}^{(1)}\). For this we define
			\begin{equation*}
				\psi(u) = u-\gamma(u)~, \quad u \in \Z^{2n}.
			\end{equation*}
			Then \(\psi(u) \in \mathcal{K}\) for every \(u\in \Z^{2n}\) and \(\varphi_{u}(\psi(u)) = \gamma(u)\). By (\ref{Uu1}), we have \(U_{u}k_{\psi(u)} = k_{\gamma(u)} e^{iIm\langle u, \psi(u) \rangle}\). Therefore
			\begin{eqnarray*}
				T &=& \sum_{u \in \Z^{2n}} c_{u}k_{u} \otimes \left(U_{u}k_{\psi(u)} e^{-iIm\langle u,\psi(u) \rangle}\right) =\sum_{u \in \Z^{2n}} d_{u} k_{u} \otimes (U_{u}k_{\psi(u)}) \, ,
			\end{eqnarray*}
			where \(d_{u} = c_{u}  e^{iIm\langle u,\psi(u) \rangle} \) for every \(u\in \Z^{2n}\) and we have \(|d_{u}|=|c_{u}|\). Then the operator \(T\) can be factorized as follows \(T=AB^{*}\), where
			\begin{equation*}
				A = \sum_{u \in \Z^{2n}} d_{u} k_{u} \otimes e_{u} \quad \text{ and } \quad B = \sum_{u \in \Z^{2n}} (U_{u}k_{\psi(u)}) \otimes e_{u}~. 
			\end{equation*}
			Since the map \(z \mapsto k_{z}\) is \(\norm{\cdot}_{*}\)-- continuous ( that is \(\lim_{w\to z}\norm{k_w - k_{z}}_{*} =0 \)) , and \(\mathcal{K}\) is compact, then it is uniformly continuous on \(\mathcal{K}\). Therefore, for a given \(\varepsilon>0\), the compactness of \(\mathcal{K}\) implies that there are non-empty open sets \(\Omega_1,\dots,\Omega_{m}\) and \(z_{i} \in \Omega_{i}\), \(i\in \{1,\dots,m\}\), such that 
			\begin{equation*}
				\Omega_{1} \cup \dots \cup \Omega_{m} \supset \mathcal{K}  \quad \text{ and } \quad  \norm{k_{z_i}-k_{w}}_{*} < \varepsilon \quad \text{ whenever } \quad w \in \Omega_{i}, i \in \{1,\dots,m\}.
			\end{equation*}
			From that open cover of \(\mathcal{K}\), we obtain a partition \(\mathcal{K}= E_{1} \cup \cdots \cup E_{m}\) such that \(E_{i} \subset \Omega_{i}\) for every \(i\in \{1,\dots,m\}\). We now define \(\Gamma_{i}=\{u\in \Z^{2n}: \psi(u) \in E_{i} \}\), \(i \in \{1,\dots,m\}\). Then \(\norm{k_{z_{i}} - k_{\psi(u)}}_{*}< \varepsilon \) if \(u \in \Gamma_{i}\). For all \(i \in \{1,\dots,m\} \), we also define 
			\begin{equation*}
				B_{i} = \sum_{u\in \Gamma_{i}} (U_{u}k_{z_i}) \otimes e_{u}~.
			\end{equation*}
			Then for each \( i \in \{1,\dots,m\}\), and by (\ref{Uu1})  , we have 
			\begin{equation*}
				AB^{*}_{i}  = \sum_{u\in \Gamma_{i}} d_{u} e^{-iIm\langle u,z_{i} \rangle } k_{u} \otimes k_{u-z_{i}} = \sum_{u\in \Gamma_{i}} d_{u,i}  k_{u} \otimes k_{u-z_{i}} ~,
			\end{equation*}
			where \(d_{u,i} =d_{u} e^{-iIm\langle u,z_{i}\rangle} \) and \(|d_{u,i}|=|d_{u}| = |c_{u}|\) for \(u \in \Gamma_{i} \). Thus, it follows from Proposition \ref{ProSumT1} that 
			\begin{equation}\label{ABT1}
				\{AB^{*}_{1}, \dots, AB^{*}_{m}\} \subset \mathcal{T}^{(1)}.
			\end{equation}
			On the other hand, we have 
			\begin{equation*}
				B - (B_{1} + \cdots + B_{m}) = \sum_{i=1}^{m} \sum_{u\in \Gamma_{i}} \{ U_{u}(k_{\psi(u)}-k_{z_{i}}) \} \otimes e_{u}~.
			\end{equation*}
			It follows from the fact that \(\Gamma_{1},\dots, \Gamma_{m}\) form a partition of \(\Z^{2n}\) and from Lemma \ref{LemUh} that 
			\begin{equation*}
				\norm{ B - (B_{1} + \cdots + B_{m}) } \le C \max_{1\le i \le m} \sup_{u\in \Gamma_{i}} \norm{ k_{\psi(u)} -k_{z_{i}} }_{*} \le C \varepsilon~.
			\end{equation*}
			We also have from Lemma \ref{LemUh} that \(A\) is a bounded operator. Hence
			\begin{eqnarray*}
				\norm{T - (AB^{*}_{1} + \cdots + AB^{*}_{m})} &=& \norm{AB^{*} - (AB^{*}_{1} + \cdots + AB^{*}_{m})} \\
				&\le& \norm{A} \norm{ B^{*} - (B^{*}_{1} + \cdots + B^{*}_{m}) } \\
				&=& \norm{A} \norm{ B - (B_{1} + \cdots + B_{m}) } \\
				&\le& C \norm{A} \varepsilon~.
			\end{eqnarray*}
			Since this inequality holds for an arbitrary \(\varepsilon>0\), then combined with (\ref{ABT1}) , we conclude that \(T\in \mathcal{T}^{(1)}\). This completes the proof.
		\end{proof}
		
		\section{Proof and a Consequence of the Main Result }
		
		In this section, we establish the proof of Theorem \ref{XiaFock} and use the result of Bauer and Isralowitz in \cite{wangzhu2013boundedness} to deduce some of its consequences.
		
		\begin{proof}[\textbf{Proof of Theorem \ref{XiaFock} }]
			Since \(\mathcal{WL}\) is a \(*\)-algebra, \(C^{*}(\mathcal{WL})\) is just the norm closure of \(\mathcal{WL}\). Also, from Example \ref{Ex1}, we know that \(\mathcal{WL} \supset \{T_{f}: f \in L^{\infty}(\C^n, dV) \}\). Therefore \(\mathcal{T}^{(1)} \subset C^{*}(\mathcal{WL})\). Hence we just have to show that \(\mathcal{WL} \subset \mathcal{T}^{(1)}\) to complete the proof.
			
			Let \(B \in \mathcal{WL} \), then from (\ref{Bint}), we have 
			\begin{equation}\label{Bint2}
				B = \int_{S} \int_{S} E_{w} B E_{z} dV(w) dV(z)~. 
			\end{equation}
			From Proposition \ref{BT1} we know  that, the range of the map
			\begin{equation}\label{mapEwzBEz}
				(w,z) \longmapsto E_{w} B E_{z},
			\end{equation}
			defined from \(\C^n \times \C^n\) into \(\mathcal{B}(H^{2}(\C^n,d\mu))\) is contained in \(\mathcal{T}^{(1)} \). Therefore, every Riemann sum corresponding to the integral defined by the relation (\ref{Bint2}) belongs to \(\mathcal{T}^{(1)} \). Moreover, from Proposition \ref{Yz},  we know that the map \(z\mapsto E_{z}\) is continuous from \(\C^n\) into \(\mathcal{B}(H^{2}(\C^n,d\mu))\) with respect to the operator norm. Since the closure of \(S\times S\) is a compact subset of \(\C^n \times \C^n \), the norm continuity of (\ref{mapEwzBEz}) means that the integral in (\ref{Bint2}) is the limit with respect to the operator norm of a sequence of Riemann sums \(s_{1},\dots,s_{k}\). Hence, the fact that each \(s_{k}\) belongs to \(\mathcal{T}^{(1)} \), implies that \(B\) belongs to \(\mathcal{T}^{(1)} \).
		\end{proof}
		
		In the continuation of our work, we define the notion of Berezin transform and we state a corollary of Theorem \ref{XiaFock} which highlights its consequence on the compactness of bounded linear operators on the Fock space \(H^{2}(\C^n , d\mu)\). Foremost we recall the definition of a compact operator on \(H^{2}(\C^n, d\mu)\).
		
		\begin{defn}
			A bounded linear operator \(T\) on \(H^{2}(\C^n,d\mu)\) is \textbf{compact} if for every sequence \(\{f_{n}\}_n\) of elements of \(H^{2}(\C^n, d\mu)\)  converging weakly to zero in \(H^{2}(\C^n, d\mu)\), the sequence \(\{Tf_{n}\}_n\) converges to zero with respect to the norm topology of \(H^{2}(\C^n, d\mu)\).
		\end{defn}
		
		\begin{defn}
			Let \(A\) be a bounded linear operator on \(H^{2}(\C^n , d\mu)\). The \textbf{Berezin transform} of \(A\) is the function denoted by \(B(A)\) and defined by 
			\begin{equation*}
				B(A)(z) = \langle Ak_{z}, k_{z} \rangle ~,\qquad z \in \C^n~. 
			\end{equation*}
		\end{defn}
		
		\begin{rem}
			We can reformulate the result of Bauer and Isralowitz \cite[Theorem 1.1]{bauer2012compactness} as follows: A bounded linear operator on \(H^{2}(\C^n, d\mu)\) is compact if and only if it belongs to the \(C^{*}\)-algebra generated by weakly localized operators on \(H^{2}(\C^n, d\mu)\) and its Berezin transform vanishes at infinity.
		\end{rem}
		
		\begin{cor}\label{CorXia}
			A weakly localized operator is compact if and only if its Berezin transform vanishes at infinity.
		\end{cor}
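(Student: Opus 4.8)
The plan is to obtain Corollary \ref{CorXia} as an immediate consequence of Theorem \ref{XiaFock} together with the reformulation of the Bauer--Isralowitz compactness criterion recorded in the preceding remark, namely that a bounded operator on \(H^{2}(\C^{n},d\mu)\) is compact if and only if it lies in \(C^{*}(\mathcal{WL})\) and its Berezin transform vanishes at infinity.

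First I would dispose of the easy implication. Suppose \(B\in\mathcal{WL}\) is compact. Here I recall the elementary fact that \(k_{z}\to 0\) weakly in \(H^{2}(\C^{n},d\mu)\) as \(|z|\to\infty\): the vectors \(k_{z}\) have unit norm, and for \(h\) in the dense linear span of \(\{K_{w}:w\in\C^{n}\}\) the reproducing property gives \(\langle h,k_{z}\rangle = e^{-|z|^{2}/2}\,\overline{h(z)}\), which tends to \(0\) as \(|z|\to\infty\); by boundedness of \(\{k_{z}\}\) and a standard density argument the convergence then holds against every \(h\in H^{2}(\C^{n},d\mu)\). Compactness of \(B\) forces \(\norm{Bk_{z}}\to 0\), whence \(|B(B)(z)| = |\langle Bk_{z},k_{z}\rangle| \le \norm{Bk_{z}}\norm{k_z} = \norm{Bk_{z}}\to 0\); that is, the Berezin transform of \(B\) vanishes at infinity. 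This half uses nothing about weak localization.

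For the converse, suppose \(B\in\mathcal{WL}\) has Berezin transform vanishing at infinity. By definition \(B\in\mathcal{WL}\subset C^{*}(\mathcal{WL})\), and Theorem \ref{XiaFock} identifies \(C^{*}(\mathcal{WL})\) with \(\mathcal{T}^{(1)}\); in either description \(B\) belongs to the \(C^{*}\)-algebra generated by the weakly localized operators. Both hypotheses of the reformulated Bauer--Isralowitz criterion are therefore satisfied, and it yields that \(B\) is compact. Combining the two implications proves the corollary.

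There is no genuine obstacle in the argument: the substantive content is entirely packaged into Theorem \ref{XiaFock} (and, for the forward direction, into the classical behaviour of normalized reproducing kernels). The only step that merits an explicit line is the weak convergence \(k_{z}\to 0\) as \(|z|\to\infty\) invoked in the first implication; everything else is a direct citation.
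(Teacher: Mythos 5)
Your proposal is correct and follows essentially the same route as the paper: the corollary is obtained by combining Theorem \ref{XiaFock} with the reformulated Bauer--Isralowitz compactness criterion. Your additional elementary argument for the forward direction (via weak convergence of \(k_{z}\) to \(0\)) is fine but not needed, since that implication is already contained in the cited criterion.
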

		\begin{proof}
			The proof follows directly from \cite[Theorem 1.1]{bauer2012compactness} and Theorem \ref{XiaFock}.
		\end{proof}

	\section*{Acknowledgments}
	
	The author wishes to thank AIMS Cameroon (the African Institute for Mathematical Sciences) for giving her the opportunity to explore this topic during her Master degree.

	\renewcommand{\bibname}{References}
	\bibliographystyle{plain}
	\bibliography{template-bibliography}
	\addcontentsline{toc}{chapter}{References}
	
	\vspace{2cm}
	AIMS-CAMEROON, CRYSTAL GARDENS, P.O.Box 608, LIMBE,CAMEROON\\
	Email address: solange.difo@aims.cameroon.org
	
\end{document}